\newtheorem{theorem}{Theorem}[section]
\newtheorem{lemma}[theorem]{Lemma}
\newtheorem{proposition}[theorem]{Proposition}
\newtheorem{corollary}[theorem]{Corollary}
\theoremstyle{definition}
\newtheorem{remark}[theorem]{Remark}
\title[Bounded orbits of $\mathrm{SL}_3(\mathbb{C})/\mathrm{SL}_3(\mathcal{O}_{\mathbb{K}})$]{Hyperplane absolute winning property of bounded orbits under diagonalizable flows on $\mathrm{SL}_3(\mathbb{C})/\mathrm{SL}_3(\mathcal{O}_{\mathbb{K}})$}
\author{Gaurav Sawant}
\address{Department of Mathematics, The Institute of Science, 15, Madam Cama Road, Mumbai 400032, India}
\email{gauravsawant.math@gmail.com}
\date{}
\begin{document}

\begin{abstract}

We extend the work of An, Guan and Kleinbock on bounded orbits of diagonalizable flows on $\mathrm{SL}_3(\mathbb{R})/\mathrm{SL}_3(\mathbb{Z})$ to $\mathrm{SL}_3(\mathbb{C})/\mathrm{SL}_3(\mathcal{O}_{\mathbb{K}})$, where $\mathbb{K}$ is an imaginary quadratic field.  To achieve this, we first prove a complex analogue of Minkowski's Linear Forms Theorem.  We then set up an appropriate Schmidt game in $\mathbb{C}^3$ such that bounded orbits correspond to a hyperplane-absolute-winning set consisting of certain vectors in $\mathbb{C}^3$ relative to an approximation by imaginary quadratic rationals in $\mathbb{K}$.
\end{abstract}
\maketitle

\section{Introduction}

In the seminal paper \cite{Sch22}, W. Schmidt devised a game for which badly approximable numbers constitute a winning set.  Schmidt's game has been a powerful tool for proving that certain sets of badly approximable numbers have full Hausdorff dimension.  Following the work of McMullen \cite{McM} on absolute and potential games, the more refined notion of hyperplane-absolute game on $\mathbb{R}^n$ was introduced in \cite{BFKRW}.  This notion has been successfully used to study the relation between a certain exceptional set of vectors in $\mathbb{R}^3$ and bounded orbits of diagonalizable flows on $\mathrm{SL}_3(\mathbb{R})/\mathrm{SL}_3(\mathbb{Z})$ in \cite{AGK}.

Geometry of numbers, originating from the work of Minkowski \cite{Minkowski1896}, has profoundly influenced the field of Diophantine approximation (See also \cite{Cassels}).  Building over this work, Siegel \cite{Siegel} proved a powerful existence theorem for systems of linear equations with integer coefficients, now known as Siegel's Lemma.  Bombieri and Vaaler \cite{BV83} extended this result to ad\`{e}les over an algebraic number field $k$.  In another direction, Minkowski's geometry of numbers was developed for algebraic number fields by Rogers and Swinnerton-Dyer \cite{RSD}, and to $S$-adic global fields by Kleinbock, Shi and Tomanov \cite{KST}. 

Rational approximations to complex numbers have been studied for well over a century.  A. Hurwitz \cite{Hur1} introduced a generalization of continued fractions to complex numbers over the discrete subrings $\mathbb{Z}[i]$ and $\mathbb{Z}[\sqrt{-3}]$.  Approximations of complex numbers were further explored in detail by Lakein \cite{Lakein} and A. Schmidt \cite{ASchmidt} (see also \cite{Dani1,Dani2}).  Building over these works, Hines \cite{Hines} has explicitly determined the bounds on coefficients for continued fraction approximations relative to Euclidean imaginary quadratic fields.  In another direction, following Baker \cite{Baker} and Stark \cite{Stark}, Esdahl-Schou and Kristensen \cite{ESK} showed that the badly approximable complex numbers relative to the class-number one number fields form a set of full Hausdorff dimension.

Recently, modern methods such as the Dani correspondence \cite{Dani86} have been used to study badly approximable complex numbers.  Badly approximable complex vectors relative to a number field were first explored in \cite{EGL}, where it was shown that on $C^1$ curves, they form a winning set, thus having full Hausdorff dimension.  In the same paper, Schmidt's conjecture on intersection of sets of badly approximable numbers (\cite{Sch82}; see also \cite{BPV}) was verified in the case of real quadratic number fields.  In \cite{S}, it was shown that the set of badly approximable complex numbers by ratios of elements of the ring of integers of totally imaginary number fields has full Hausdorff dimension (in fact, it is hyperplane-absolute-winning with respect to a suitable Schmidt game), building on the techniques developed in \cite{AGGL}.

In this paper, we set up a hyperplane-absolute game on $\mathbb{C}^3$ to study a weighted approximation of vectors in $\mathbb{C}^3$ relative to an imaginary quadratic number field $\mathbb{K}$.  Extending the methods of An, Guan and Kleinbock \cite{AGK} to the complex setting, we obtain the following analogue of \cite[Theorem 1.2, Theorem 1.3]{AGK}:

\begin{theorem}\label{EF}
 Let $G=\mathrm{SL}_3(\mathbb{C})$, and $\Gamma=\mathrm{SL}_3(\mathcal{O}_{\mathbb{K}})$.  Let $F^+ = \{g(t):t\geq 0 \}$ denote a one-parameter $\mathbb{R}$-diagonalizable subsemigroup of $G$.  Then the set
\[ E(F^+)=\{\Lambda\in G/\Gamma : F^+\Lambda~\text{is~bounded}\}\]
is hyperplane-absolute-winning (HAW) on $G/\Gamma$.  Moreover, for \[H=\{h\in G:\lim_{t\to\infty} (g(t))^{-1}hg(t)=e \},\] the set $\{h\in H : h\Lambda\in E(F^+)\}$ is HAW on $H$ for any $\Lambda\in G/\Gamma$.
\end{theorem}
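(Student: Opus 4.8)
\emph{Setup and reduction.} The plan is to transpose the argument of \cite{AGK} to the complex setting, replacing every use of the geometry of numbers over $\mathbb{Z}$ by its counterpart over $\mathcal{O}_{\mathbb{K}}$ and reading all resonances in the complex sense. Since $E(F^+)$ is equivariant under conjugating $F^+$ and simultaneously left-translating the base point --- a $C^1$ diffeomorphism of $G/\Gamma$ under which HAW is preserved --- we may assume $g(t)=\mathrm{diag}(e^{a_1t},e^{a_2t},e^{a_3t})$ with $a_1\ge a_2\ge a_3$ real and $a_1+a_2+a_3=0$. If $a_1=a_2=a_3$ then $F^+$ is trivial and $E(F^+)=G/\Gamma$, so we assume $a_1>a_3$. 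The expanding horospherical subgroup $H=\{h:g(t)^{-1}hg(t)\to e\}$ is then the group of unipotent upper-triangular complex matrices supported on the entries $(i,j)$ with $a_i>a_j$; in exponential coordinates it is biholomorphic to $\mathbb{C}^d$, with $d=3$ in the regular case $a_1>a_2>a_3$ and $d=2$ on either wall $a_1=a_2$ or $a_2=a_3$. Its complementary directions are the centralizer $Z$ of $F^+$ and the contracting horospherical subgroup $H^-=\{h:g(t)hg(t)^{-1}\to e\}$, and $\mathfrak g=\mathfrak h^-\oplus\mathfrak z\oplus\mathfrak h$.

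\emph{The complex Dani correspondence.} The next ingredient is a complex Dani correspondence. By Mahler's compactness criterion, the orbit $F^+h\Lambda$ is bounded in $G/\Gamma$ exactly when $\inf_{t\ge0}\lambda_1(g(t)h\Lambda)>0$, where $\lambda_1$ is the length of a shortest nonzero vector of the lattice in $\mathbb{C}^3\cong\mathbb{R}^6$. Writing $h\in H$ in complex coordinates and using the complex analogue of Minkowski's Linear Forms Theorem proved earlier in the paper to control the successive minima of $g(t)h\Lambda$ as $t$ sweeps $[0,\infty)$, this becomes a weighted badly-approximable condition on the complex datum read off from those coordinates --- a vector, or a one-row or one-column system, according to the case --- relative to approximation by $\mathbb{K}$-rational points, with weights determined by the $a_i$ and everything taken relative to the $\mathbb{K}$-rational structure carried over from the original base point. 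Both assertions of Theorem \ref{EF} are thereby reduced to a statement about badly-approximable complex data in $\mathbb{C}^d$.

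\emph{The core.} The heart of the matter is to show that the set of weighted $\mathbb{K}$-badly-approximable data is HAW in $\mathbb{C}^d$ for the appropriate hyperplane absolute game, the one in which Alice deletes a neighborhood of a complex affine hyperplane and Bob answers with a nested ball disjoint from it. The complement of the target is $\bigcup_n D_n$, where $D_n$ is a union of neighborhoods --- of controlled thickness --- of the complex affine hyperplanes cut out by those $\mathcal{O}_{\mathbb{K}}$-lattice vectors of $\Lambda$ that become short, at dyadic scale $2^{-n}$, under $g(t)h$. The crux is a one-scale counting lemma: inside Bob's current ball only boundedly many of these dangerous neighborhoods can meet it, the bound being independent of $n$ and of the location of the ball --- possibly after a dichotomy that isolates the case in which the relevant short vectors all concentrate near a single hyperplane, which Bob then handles in one move. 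This is where one needs, beyond the complex Linear Forms Theorem, the geometry of numbers over $\mathcal{O}_{\mathbb{K}}$: a small region cannot contain too many $\mathbb{C}$-linearly independent short $\mathcal{O}_{\mathbb{K}}$-lattice vectors of the relevant type, since that would force the remaining successive minima of $g(t)h\Lambda$ to compensate, contradicting unimodularity. Granting the lemma, Bob plays by avoiding at each stage both the neighborhood just deleted by Alice and the boundedly many dangerous ones at the current scale --- possible once the game parameter is small enough --- and interleaving over all $n$, together with the stability of HAW under countable intersections, completes this step.

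\emph{From the horospherical statement to the ambient one, and the main obstacle.} Pulling the previous step back through a $C^1$ chart $H\cong\mathbb{C}^d$ and the dictionary above gives the second assertion of Theorem \ref{EF}. For the first, note that $E(F^+)$ is left-invariant under $Z$ (since $z$ commutes with $F^+$) and is also preserved by left translation by $H^-$ (since $g(t)h^-g(t)^{-1}\to e$ renders $F^+h^-\Lambda$ asymptotic to $F^+\Lambda$). In the chart about $\Lambda$ given by $(h^-,z,h)\mapsto h^-zh\Lambda$ --- a $C^1$ diffeomorphism near $(e,e,e)$ by the decomposition $\mathfrak g=\mathfrak h^-\oplus\mathfrak z\oplus\mathfrak h$ --- these two invariances show that $h^-zh\Lambda\in E(F^+)$ if and only if $h\Lambda\in E(F^+)$; thus in this chart $E(F^+)$ is precisely the preimage of the HAW set $\{h\in H:h\Lambda\in E(F^+)\}$ under the projection onto the $H$-factor. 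Since the preimage of an HAW set under a $C^1$ submersion is HAW and HAW is a local property, $E(F^+)$ is HAW on $G/\Gamma$. The hard part, as in \cite{AGK}, will be the one-scale counting lemma in its complex form: the classical inputs --- gaps between successive minima, the number of sublattices of bounded covolume, non-concentration of short lattice vectors --- must be re-established over the rank-two lattice $\mathcal{O}_{\mathbb{K}}\subset\mathbb{C}$, where no continued fraction algorithm is available, and the case analysis across the regular chamber and the two Weyl walls, where $H$ drops from $\mathbb{C}^3$ to $\mathbb{C}^2$ and the resonances change type, must be carried through in each instance; with the complex Minkowski Linear Forms Theorem in hand, these reduce to delicate but explicit volume comparisons.
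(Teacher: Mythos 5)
Your architecture coincides with the paper's: normalize the flow to $g_\lambda(t)=\mathrm{diag}(e^{\lambda t},e^{(1-\lambda)t},e^{-t})$, translate boundedness of $F_\lambda^+u_{\mathbf z}^{-1}\Gamma$ into a weighted badly-approximable condition over $\mathcal O_{\mathbb K}$ via Mahler's criterion (Proposition \ref{AGKl3.2}), prove that the resulting set $\mathrm B_{\mathbb K}(\lambda)$ is HAW in $\mathbb C^3$ (Proposition \ref{bad}), and transfer to $G/\Gamma$ through the $H^-ZH$ product chart and the stability of HAW under $C^1$ maps. So the route is the same as the paper's, which in turn follows \cite{AGK}.

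The gap is that the step you yourself flag as ``the hard part'' is where essentially all of the paper's content sits, and the form in which you state it is not the one that is actually proved or that the game strategy uses. What the paper establishes (Proposition \ref{AGKl3.7}, via Lemmas \ref{AGKl5.1} and \ref{AGKl5.2}) is not merely that ``boundedly many'' dangerous neighborhoods meet the current ball at a given scale, but that for each $B\in\mathscr B_n'$ and each $k$ \emph{all} of the sets $\Delta_{\varepsilon,\mathbb K,\lambda}(\mathbf v)\cap B'$ with $\mathbf v\in\mathscr V_{B',k}$ lie in a single tube $\mathscr H_k(B)^{(\rho_0R^{-n-k}d^{-1/2})}$ around one complex hyperplane; the concentration phenomenon you relegate to an exceptional sub-case is in fact the main statement. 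Obtaining it requires machinery absent from your sketch: the dual vectors $\mathbf w(B,\mathbf v)$ supplied by the complex Minkowski theorem (Corollary \ref{AGKl3.3}), the height $H_B(\mathbf v)$ of \eqref{AGK_height} with the stratification $\mathscr V_{B,k}$, and the conjugate-bilinear cross product on $\mathbb C^3$ with its triple-product identity \eqref{vec3prod}, used to force $\mathbf w_1\times\mathbf w_2=\mathbf 0$ and hence linear dependence of the dual vectors. Your appeal to ``unimodularity'' gestures at this idea but, as stated, does not yield even a finite multiplicity bound. Two smaller points: the paper runs the hyperplane-\emph{potential} game, deleting the whole countable family $\{\mathscr H_k(B)\}_{k\in\mathbb N}$ in one move (legal by \eqref{AGKe3.13}) and then invoking the equivalence of HPW and HAW (Remark \ref{rem}), rather than your interleaving; and in your core paragraph the roles are reversed --- it is Player A who deletes the dangerous hyperplane neighborhoods, while Player B is the adversary choosing the nested balls.
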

In \S2, we prove a complex analogue of Minkowski's Linear Forms Theorem corresponding to the imaginary quadratic number field $\mathbb{K}$.  We begin \S3 by a correlation of the set of bounded trajectories of a particular dynamical action on $\mathrm{SL}_3(\mathbb{C})/\mathrm{SL}_3(\mathcal{O}_{\mathbb{K}})$ with a particular set of vectors in $\mathbb{C}^3$.  To achieve this, we consider the sets of vectors in $\mathbb{C}^3$ sufficiently away from the rational points relative to $\mathbb{K}$.  We then classify the closed-ball subsets of a given closed ball in $\mathbb{C}^3$ as well as the said rational points using a suitable height function.  We also show that the vectors in $\mathbb{C}^3$ in a closed-ball subset that are very close to the rational points in fact lie in an appropriate neighborhood of a complex hyperplane.  Using this observation, we set up in \S4 a hyperplane-potential game on $\mathbb{C}^3$ to show that this particular set of complex vectors is HAW.  Using this, we arrive at a proof of Theorem \ref{EF}.

\section{Preliminaries}
Consider an imaginary quadratic number field $\mathbb{K}=\mathbb{Q}(\sqrt{-d})$, where $d$ is a squarefree positive integer, and let $\mathcal{O}_{\mathbb{K}}$ denote its ring of integers.  Observe that there is only one archimedean place for $\mathbb{K}$, which is the field of complex numbers $\mathbb{C}$, so that $\mathbb{C}$ is an $S$-arithmetic space over $\mathbb{K}$.  In particular, the (only) two field embeddings $\mathbb{K}\hookrightarrow\mathbb{C}$ are given by $p\mapsto p$ (the identity mapping) and $p\mapsto\overline{p}$ (the conjugate mapping), so that the $\mathbb{K}$-norm of $p\in\mathbb{K}$ is $N_{\mathbb{K}}(p)=p\overline{p}=|p|^2$.
 \subsection{A Complex Version of Minkowski's Linear Forms Theorem}
Analogous to \cite[Theorem 5.3]{Neukirch} and \cite[Theorem 95]{Hecke}, we utilize the real version of Minkowski's Linear Forms theorem to state:
\begin{theorem}[Minkowski's Linear Forms Theorem for general lattices in $\mathbb{R}^n$]\label{MinkRLF}
 Let $\Lambda$ be a lattice in $\mathbb{R}^n$.  Consider real linear forms
 \[ L_j(x_1,\dots,x_n) = \sum_{k=1}^n a_{jk}x_k, \quad j=1,\dots,n \]
 such that $\det(a_{jk})\neq 0$, and let $\kappa_1,\dots,\kappa_n>0$ be such that $\displaystyle \prod_{j=1}^n\kappa_j\geq|\det(a_{jk})|\mathrm{covol}(\Lambda)$.  Then there exists a nonzero element $\mathbf{v}\in\Lambda$ for which
 \[ |L_j(\mathbf{v})|<\kappa_j,\quad j=1,\dots,n. \]
\end{theorem}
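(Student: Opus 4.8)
The plan is to bootstrap the classical Minkowski Linear Forms Theorem for the standard lattice $\mathbb{Z}^n$ to an arbitrary lattice $\Lambda$ by a linear change of variables adapted to a basis of $\Lambda$. Write $A=(a_{jk})$ for the coefficient matrix of the forms $L_j$, and fix a $\mathbb{Z}$-basis $\mathbf{e}_1,\dots,\mathbf{e}_n$ of $\Lambda$; let $M\in\mathrm{GL}_n(\mathbb{R})$ be the matrix with columns $\mathbf{e}_1,\dots,\mathbf{e}_n$, so that $\Lambda=M\mathbb{Z}^n$ and $|\det M|=\mathrm{covol}(\Lambda)$. For $\mathbf{m}\in\mathbb{Z}^n$ set $\mathbf{v}=M\mathbf{m}\in\Lambda$; then $L_j(\mathbf{v})=(AM\mathbf{m})_j=:\widetilde{L}_j(\mathbf{m})$, so the $\widetilde{L}_j$ are linear forms in $\mathbf{m}$ with coefficient matrix $AM$ and
\[ |\det(AM)| = |\det A|\,|\det M| = |\det A|\,\mathrm{covol}(\Lambda) \le \prod_{j=1}^n \kappa_j . \]
Applying the classical Minkowski Linear Forms Theorem over $\mathbb{Z}^n$ to $\widetilde{L}_1,\dots,\widetilde{L}_n$ yields a nonzero $\mathbf{m}\in\mathbb{Z}^n$ with $|\widetilde{L}_j(\mathbf{m})|<\kappa_j$ for every $j$; since $M$ is invertible, $\mathbf{v}=M\mathbf{m}$ is then a nonzero element of $\Lambda$ with $|L_j(\mathbf{v})|<\kappa_j$ for every $j$, as desired.

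Equivalently, one may argue directly from Minkowski's convex body theorem: the map $T\mathbf{x}=(L_1(\mathbf{x}),\dots,L_n(\mathbf{x}))$ is a linear automorphism of $\mathbb{R}^n$ with $|\det T|=|\det A|$, hence $\Lambda':=T(\Lambda)$ is a full lattice with $\mathrm{covol}(\Lambda')=|\det A|\,\mathrm{covol}(\Lambda)$, and the conclusion is exactly the assertion that the origin-symmetric open box $C=\prod_{j=1}^n(-\kappa_j,\kappa_j)$, of volume $\mathrm{vol}(C)=2^n\prod_j\kappa_j\ge 2^n\,\mathrm{covol}(\Lambda')$, contains a nonzero point of $\Lambda'$. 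I would include this second formulation since it is what the complex analogue in \S2 will be modeled on.

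The only step calling for genuine care—and the crux of Minkowski's theorem itself—is the boundary case $\prod_j\kappa_j=|\det A|\,\mathrm{covol}(\Lambda)$, where the convex body theorem applied to the open box $C$ does not immediately produce a lattice point. The standard resolution, which I would invoke here, is to apply the theorem to the dilated closed boxes $\prod_j[-t\kappa_j,t\kappa_j]$ for $t>1$: each has volume strictly exceeding $2^n\,\mathrm{covol}(\Lambda')$ and so contains a nonzero $\mathbf{w}_t\in\Lambda'$; for $t\in(1,2]$ all these points lie in the fixed compact box $\prod_j[-2\kappa_j,2\kappa_j]$, which meets the discrete set $\Lambda'$ in finitely many points, so a single $\mathbf{w}\neq 0$ recurs along a sequence $t\downarrow 1$, furnishing the required nonzero point of $\Lambda'$ in the closure of $C$. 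As this limiting argument is already absorbed into the classical statement over $\mathbb{Z}^n$, the change-of-variables reduction of the first paragraph is all that the proof actually requires.
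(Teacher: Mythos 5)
Your first paragraph is exactly the paper's proof: fix $M\in\mathrm{GL}_n(\mathbb{R})$ with $\Lambda=M\mathbb{Z}^n$, pull the forms back to $\widetilde L_j=L_j\circ M$, and invoke the classical theorem over $\mathbb{Z}^n$; the remaining paragraphs (the convex-body reformulation and the dilation argument) are a correct but redundant elaboration of what that classical statement already contains. One caveat worth noting, which applies equally to the statement as printed: in the boundary case $\prod_j\kappa_j=|\det A|\,\mathrm{covol}(\Lambda)$ your limiting argument (and Hecke's Theorem 94 itself) only produces a nonzero lattice point in the \emph{closed} box, i.e.\ with $|L_1(\mathbf{v})|\le\kappa_1$ rather than all inequalities strict --- the example $n=1$, $\Lambda=\mathbb{Z}$, $L_1(x)=x$, $\kappa_1=1$ shows strictness genuinely fails there. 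This is harmless for the paper, since every subsequent application (Proposition~\ref{MinkCLF}, Corollary~\ref{AGKl3.3}) assumes the product is strictly larger than $|\det A|\,\mathrm{covol}(\Lambda)$, but you should not claim the all-strict conclusion under the non-strict hypothesis.
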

\begin{proof}
 Fix $g\in\mathrm{GL}_n(\mathbb{R})$ with $\Lambda=g\mathbb{Z}^n$ so that $\mathrm{covol}(\Lambda)=|\det(g)|$.  Then the theorem follows from Minkowski's Linear Forms Theorem for $\mathbb{Z}^n$ \cite[Theorem 94]{Hecke} by taking $L_j'=L_j\circ g$.
\end{proof}
Now we will regard $\mathcal{O}_{\mathbb{K}}^n$ as a lattice in $\mathbb{R}^{2n}$ through the identification $\mathbb{C}^n \simeq \mathbb{R}^{2n}$.  For this, we fix the integral basis $\{1,\omega\}$ of $\mathcal{O}_{\mathbb{K}}$ for $\omega=\frac{d_{\mathbb{K}}+\sqrt{-d_\mathbb{K}}}{2}$, where  $d_{\mathbb{K}}=\begin{cases}
 4d & \text{if}~d\equiv 1,2 \mod 4, \\
  d & \text{if}~d\equiv 3 \mod 4. 
  \end{cases}$ is the discriminant of $\mathbb{K}$.  In other words,
$\mathcal{O}_{\mathbb{K}}=                               \mathbb{Z}\oplus\mathbb{Z}\omega$.  Observe that every $z={x}+i{y}$ can be written as $z=\tilde{x}+\tilde{y}\omega$, where $ \begin{pmatrix} x \\ y \end{pmatrix} = \begin{pmatrix} 1 & \frac{{d_{\mathbb{K}}}}{2} \\ 0 & \frac{\sqrt{d_{\mathbb{K}}}}{2} \end{pmatrix}\begin{pmatrix} \tilde{x} \\ \tilde{y} \end{pmatrix}$.
Accordingly, we consider the lattice
\[ \Lambda_{\mathbb{K}}=                            \mathrm{diag}\left(\underbrace{\begin{pmatrix} 1 & \frac{{d_{\mathbb{K}}}}{2} \\ 0 & \frac{\sqrt{d_{\mathbb{K}}}}{2} \end{pmatrix},\dots,\begin{pmatrix} 1 & \frac{{d_{\mathbb{K}}}}{2} \\ 0 & \frac{\sqrt{d_{\mathbb{K}}}}{2} \end{pmatrix}}_{n~\text{times}}\right)\mathbb{Z}^{2n} \]
to propose the following complex version of Minkowski's Linear Forms Theorem:
\begin{proposition}[A complex version of Minkowski's Linear Forms Theorem]\label{MinkCLF}
 Let $\mathbb{K}=\mathbb{Q}[\sqrt{-d}]$, where $d$ is a squarefree positive integer.  Consider a system of $n$ complex linear forms
 \[ L_j(z_1,\dots,z_n)=\sum_{k=1}^n a_{jk}z_k,\quad (1\leq j\leq n) \]
 such that the associated $2n\times 2n$ matrix
 \[ L =  \left(\begin{pmatrix}
             \mathrm{Re}(a_{jk}) & -\mathrm{Im}(a_{jk}) \\ 
             \mathrm{Im}(a_{jk}) & \mathrm{Re}(a_{jk}) 
            \end{pmatrix}\right)_{1\leq j,k\leq n} \]
 satisfies $\det L \neq 0$.  Let $c_1,\dots,c_n>0$ be such that
 \[ \prod_{k=1}^n c_k>d_{\mathbb{K}}^{\frac{n}{4}}|\det L|. \]
 Then there is a nonzero element $\mathbf{v}\in\mathcal{O}_{\mathbb{K}}^n$ for which
 \[ |L_j(\mathbf{v})|<c_j.\quad (1\leq j\leq n). \]
\end{proposition}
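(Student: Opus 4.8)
The plan is to deduce Proposition~\ref{MinkCLF} from Theorem~\ref{MinkRLF} applied to the lattice $\Lambda_{\mathbb{K}}\subset\mathbb{R}^{2n}$, after splitting each complex linear form into its real and imaginary parts. Writing $z_k=u_k+iv_k$ and using the coordinates $(u_1,v_1,\dots,u_n,v_n)$ on $\mathbb{C}^n\simeq\mathbb{R}^{2n}$, the relation $z=\tilde{x}+\tilde{y}\omega$ recorded above shows that $\Lambda_{\mathbb{K}}$ is exactly the image of $\mathcal{O}_{\mathbb{K}}^n$ under this identification, so that nonzero points of $\Lambda_{\mathbb{K}}$ are the same as nonzero vectors $\mathbf{v}\in\mathcal{O}_{\mathbb{K}}^n$. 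Since the generating matrix of $\Lambda_{\mathbb{K}}$ is block diagonal with $n$ copies of $\begin{pmatrix}1 & d_{\mathbb{K}}/2\\ 0 & \sqrt{d_{\mathbb{K}}}/2\end{pmatrix}$, each of determinant $\sqrt{d_{\mathbb{K}}}/2$, a first step is the covolume computation $\mathrm{covol}(\Lambda_{\mathbb{K}})=(\sqrt{d_{\mathbb{K}}}/2)^n=2^{-n}d_{\mathbb{K}}^{n/2}$.

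Next I would pass from the $n$ complex forms $L_j$ to the system of $2n$ real linear forms $\mathrm{Re}(L_j)$ and $\mathrm{Im}(L_j)$, $1\le j\le n$, in the $2n$ real variables $u_k, v_k$. Expanding $L_j(\mathbf{z})=\sum_k(\mathrm{Re}(a_{jk})+i\,\mathrm{Im}(a_{jk}))(u_k+iv_k)$ and collecting real and imaginary parts shows that, once both the $2n$ forms and the $2n$ variables are listed in the matching interleaved order (real part of the first index, then imaginary part of the first index, then real part of the second index, and so on), the coefficient matrix of this real system is precisely the matrix $L$ of the statement; in particular it is nonsingular by hypothesis, so Theorem~\ref{MinkRLF} applies. (One may note in passing that $\det L=|\det(a_{jk})|^2>0$, which is why $\det L\neq0$ is the natural non-degeneracy condition.) To convert a bound on the complex modulus into bounds on real and imaginary parts I would use the elementary fact that $|\mathrm{Re}\,w|<c/\sqrt{2}$ together with $|\mathrm{Im}\,w|<c/\sqrt{2}$ forces $|w|<c$; accordingly I set $\kappa_{2j-1}=\kappa_{2j}=c_j/\sqrt{2}$ for $1\le j\le n$.

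It then remains only to verify the product hypothesis of Theorem~\ref{MinkRLF} for $\Lambda_{\mathbb{K}}$, these $2n$ forms, and these $\kappa_i$. Here $\prod_{i=1}^{2n}\kappa_i=2^{-n}\prod_{j=1}^{n}c_j^{2}$, while $|\det L|\,\mathrm{covol}(\Lambda_{\mathbb{K}})=2^{-n}|\det L|\,d_{\mathbb{K}}^{n/2}$, so the required inequality $\prod_i\kappa_i\ge|\det L|\,\mathrm{covol}(\Lambda_{\mathbb{K}})$ amounts to $\big(\prod_j c_j\big)^2\ge|\det L|\,d_{\mathbb{K}}^{n/2}$, which is guaranteed by the hypothesis $\prod_k c_k>d_{\mathbb{K}}^{n/4}|\det L|$. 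Theorem~\ref{MinkRLF} then produces a nonzero $\mathbf{v}\in\Lambda_{\mathbb{K}}$, i.e.\ a nonzero $\mathbf{v}\in\mathcal{O}_{\mathbb{K}}^n$, with $|\mathrm{Re}(L_j(\mathbf{v}))|<c_j/\sqrt{2}$ and $|\mathrm{Im}(L_j(\mathbf{v}))|<c_j/\sqrt{2}$ for every $j$, and hence $|L_j(\mathbf{v})|<c_j$, which is the assertion. I do not expect a genuine obstacle here: the argument is essentially bookkeeping in the real/complex dictionary --- arranging the $2n$ coordinates so that the realified coefficient matrix is literally $L$, carrying out the covolume computation so that the discriminant enters with exactly the power $n/2$, and choosing the $\kappa_i$ so that the factors of $2$ and $\sqrt{2}$ cancel and leave the clean constant $d_{\mathbb{K}}^{n/4}$.
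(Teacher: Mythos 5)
Your proposal follows exactly the paper's route: realify the $n$ complex forms into $2n$ real forms whose coefficient matrix (in the interleaved ordering) is $L$, apply Theorem~\ref{MinkRLF} to the lattice $\Lambda_{\mathbb{K}}$ of covolume $2^{-n}d_{\mathbb{K}}^{n/2}$, and take $\kappa_{2j-1}=\kappa_{2j}=c_j/\sqrt{2}$. The paper's own proof is precisely this and is in fact terser --- it does not carry out the covolume computation or the verification of the product hypothesis that you spell out.

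However, the one step you make explicit that the paper does not --- ``which is guaranteed by the hypothesis'' --- is exactly where the argument fails. Squaring the hypothesis $\prod_k c_k> d_{\mathbb{K}}^{n/4}|\det L|$ gives $\bigl(\prod_k c_k\bigr)^2> d_{\mathbb{K}}^{n/2}|\det L|^{2}$, whereas Theorem~\ref{MinkRLF} requires $\bigl(\prod_k c_k\bigr)^2\ge d_{\mathbb{K}}^{n/2}|\det L|$; the former implies the latter only when $|\det L|\ge 1$. Since, as you yourself note, $\det L=|\det(a_{jk})|^2$, the hypothesis your computation actually supports is $\prod_k c_k\ge d_{\mathbb{K}}^{n/4}|\det L|^{1/2}=d_{\mathbb{K}}^{n/4}|\det(a_{jk})|$, i.e.\ the determinant should enter through the complex determinant of $(a_{jk})$ rather than through $\det L$. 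The statement as printed is in fact false when $|\det L|$ is small: for $n=1$ and $L_1(z)=az$ one has $\det L=|a|^2$, the hypothesis reads $c_1>d_{\mathbb{K}}^{1/4}|a|^2$, yet the conclusion $|av|<c_1$ for some nonzero $v\in\mathcal{O}_{\mathbb{K}}$ forces $c_1>|a|$ because $|v|\ge1$ for nonzero algebraic integers; taking $|a|$ small enough violates this. So the defect lies in the exponent on $|\det L|$ in the statement rather than in your strategy (and in the paper's only application, Corollary~\ref{AGKl3.3}, one has $\det L=|q|^2\ge1$, so nothing downstream is harmed), but your final implication is not valid as written and should be restated with $|\det L|^{1/2}$ or under the additional assumption $|\det L|\ge1$.
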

\begin{proof}
 Each complex linear form $L_j$ gives rise to two real linear forms
 \[ \sum_{k=1}^n(\mathrm{Re}(a_{jk})x_k-\mathrm{Im}(a_{jk})y_k)\] and \[
             \sum_{k=1}^n (\mathrm{Im}(a_{jk})x_k + \mathrm{Re}(a_{jk})y_k), \]
corresponding to $z_k=x_k+iy_k$, $1\leq k\leq n$.  The statement of the proposition follows from Theorem \ref{MinkRLF} using the $2n$ real linear forms as above, with $\Lambda=\Lambda_{\mathbb{K}}$, and positive real numbers $\kappa_1,\dots,\kappa_{2n}$ given by
\begin{equation}\label{kap} \kappa_{2k-1}=\kappa_{2k}=\frac{c_k}{\sqrt{2}} \quad (1\leq k\leq n). \end{equation} 
\end{proof}

Letting $n=3$, we obtain the following corollary as an analogue of \cite[Lemma 3.3]{AGK}:

\begin{corollary}\label{AGKl3.3}
 Assume that $\mathbb{K}$ is as in Proposition \ref{MinkCLF}.  Let $z\in\mathbb{C}$.  For any $(p,r,q)\in \tilde{\mathcal{O}}^3_{\mathbb{K}}$, there exists $(a,b,c)\in\mathcal{O}_{\mathbb{K}}^3$ with $(a,b)\neq(0,0)$ such that
  \[ \overline{a}p+\overline{b}r+\overline{c}q=0,\quad |a|\leq{\sqrt[4]{4d}}|q|^\lambda, \quad |b+za|\leq{\sqrt[4]{4d}}|q|^{1-\lambda}. \]
\end{corollary}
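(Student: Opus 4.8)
The plan is to strip off the conjugate-linearity by a change of variables, reduce the claim to finding a small nonzero point of an explicit rank-$4$ real lattice, and then invoke Theorem~\ref{MinkRLF} for that lattice. First I would record the reduction: writing a prospective solution as $(a,b,c)=(\overline{\alpha},\overline{\beta},\overline{\gamma})$ with $(\alpha,\beta,\gamma)\in\mathcal{O}_{\mathbb{K}}^3$, the relation $\overline{a}p+\overline{b}r+\overline{c}q=0$ becomes $\alpha p+\beta r+\gamma q=0$, while $|a|=|\alpha|$ and $|b+za|=|\overline{\beta+\overline{z}\,\alpha}|=|\beta+\overline{z}\,\alpha|$. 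Since $\mathcal{O}_{\mathbb{K}}$ is stable under conjugation, it therefore suffices to prove: for every $w\in\mathbb{C}$ and every $(p,r,q)$ with $q\neq 0$ (which is part of the hypothesis $(p,r,q)\in\tilde{\mathcal{O}}_{\mathbb{K}}^3$), there exists $(\alpha,\beta,\gamma)\in\mathcal{O}_{\mathbb{K}}^3$ with $(\alpha,\beta)\neq(0,0)$, $\alpha p+\beta r+\gamma q=0$, $|\alpha|\le\sqrt[4]{4d}\,|q|^{\lambda}$ and $|\beta+w\alpha|\le\sqrt[4]{4d}\,|q|^{1-\lambda}$; the corollary then follows on applying this with $w=\overline{z}$ and conjugating the resulting triple coordinatewise.

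For the reduced statement, since $q\neq 0$ the relation $\alpha p+\beta r+\gamma q=0$ forces $\gamma=-(\alpha p+\beta r)/q$, and $\gamma\in\mathcal{O}_{\mathbb{K}}$ precisely when $(\alpha,\beta)$ lies in the set $\Lambda=\{(\alpha,\beta)\in\mathcal{O}_{\mathbb{K}}^2 : q\mid \alpha p+\beta r\text{ in }\mathcal{O}_{\mathbb{K}}\}$, which is the kernel of the $\mathcal{O}_{\mathbb{K}}$-module homomorphism $\mathcal{O}_{\mathbb{K}}^2\to\mathcal{O}_{\mathbb{K}}/(q)$, $(\alpha,\beta)\mapsto\alpha p+\beta r$. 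In particular $[\mathcal{O}_{\mathbb{K}}^2:\Lambda]\le|\mathcal{O}_{\mathbb{K}}/(q)|=N_{\mathbb{K}}(q)=|q|^2$. Identifying $\mathbb{C}^2$ with $\mathbb{R}^4$ through the integral basis $\{1,\omega\}$ in each coordinate, so that $\mathcal{O}_{\mathbb{K}}^2$ becomes a lattice of covolume $(\sqrt{d_{\mathbb{K}}}/2)^2=d_{\mathbb{K}}/4$, we deduce that $\Lambda$ is a lattice in $\mathbb{R}^4$ with $\mathrm{covol}(\Lambda)\le |q|^2 d_{\mathbb{K}}/4$.

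Now I would apply Theorem~\ref{MinkRLF} to $\Lambda\subset\mathbb{R}^4$ with the four real linear forms arising, as in the proof of Proposition~\ref{MinkCLF}, from the two complex forms $L_1(\alpha,\beta)=\alpha$ and $L_2(\alpha,\beta)=\beta+w\alpha$; their associated $4\times4$ real coefficient matrix is block lower-triangular with identity diagonal blocks, hence of determinant $1$. Taking $\kappa_1=\kappa_2=c_1/\sqrt{2}$ and $\kappa_3=\kappa_4=c_2/\sqrt{2}$ with $c_1=\sqrt[4]{4d}\,|q|^{\lambda}$ and $c_2=\sqrt[4]{4d}\,|q|^{1-\lambda}$, the hypothesis of Theorem~\ref{MinkRLF} reads $\prod_{j=1}^{4}\kappa_j=(c_1c_2/2)^2=d\,|q|^2\ge\mathrm{covol}(\Lambda)$, which holds because $\mathrm{covol}(\Lambda)\le |q|^2 d_{\mathbb{K}}/4$ and $d_{\mathbb{K}}\le 4d$ for every squarefree positive integer $d$. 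This produces a nonzero $(\alpha,\beta)\in\Lambda$ with $|\mathrm{Re}\,L_i|<c_i/\sqrt{2}$ and $|\mathrm{Im}\,L_i|<c_i/\sqrt{2}$ for $i=1,2$, hence $|\alpha|<c_1$ and $|\beta+w\alpha|<c_2$; setting $\gamma=-(\alpha p+\beta r)/q\in\mathcal{O}_{\mathbb{K}}$ then provides the triple required by the reduced statement, in fact with strict inequalities.

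The one genuinely arithmetic ingredient is the covolume bound $\mathrm{covol}(\Lambda)\le |q|^2 d_{\mathbb{K}}/4$ together with the elementary inequality $d_{\mathbb{K}}\le 4d$: this is exactly what permits the \emph{uniform} constant $\sqrt[4]{4d}$ to work for all $d$, and it is also why I work with the general-lattice Theorem~\ref{MinkRLF} rather than with Proposition~\ref{MinkCLF} directly — when $\mathcal{O}_{\mathbb{K}}$ is not a principal ideal domain, $\Lambda$ need not be a free $\mathcal{O}_{\mathbb{K}}$-module, so it cannot simply be written in the form $M\mathcal{O}_{\mathbb{K}}^2$. I do not foresee a substantive obstacle beyond this; the rest is routine bookkeeping — tracking the conjugation in the reduction step, checking that the block-triangular determinant equals $1$, and using the $\sqrt{2}$-normalisation to convert the modulus bounds $|L_i|<c_i$ into the real-coordinate bounds needed in Theorem~\ref{MinkRLF}.
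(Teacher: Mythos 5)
Your argument is correct, and it reaches the corollary by a genuinely different route from the paper. The paper keeps all three unknowns and applies Theorem \ref{MinkRLF} to the full rank-$6$ lattice $\Lambda_{\mathbb{K}}=\mathcal{O}_{\mathbb{K}}^3\subset\mathbb{R}^6$ with \emph{six} real linear forms: the first two compute $\mathrm{Re}$ and $\mathrm{Im}$ of $a\overline{p}+b\overline{r}+c\overline{q}$, and the boxes $\kappa_1,\kappa_2$ are chosen (with a case split on $d\bmod 4$) just small enough that the only element of the discrete set $\mathcal{O}_{\mathbb{K}}$ they can contain is $0$ — so the orthogonality relation is \emph{forced} rather than imposed; the remaining four forms give the bounds on $|a|$ and $|b+za|$, and $(a,b)\neq(0,0)$ follows because $a=b=0$ together with $\overline{c}q=0$ and $q\neq 0$ would force $c=0$. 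You instead eliminate the third coordinate, build the vanishing into a congruence sublattice $\Lambda=\ker\bigl(\mathcal{O}_{\mathbb{K}}^2\to\mathcal{O}_{\mathbb{K}}/(q)\bigr)$ of rank $4$, bound its covolume by $|q|^2 d_{\mathbb{K}}/4$ via the index, and apply Theorem \ref{MinkRLF} with only four forms of unit determinant. Your version buys a cleaner conclusion (the relation holds exactly by construction, the inequalities come out strict, and there is no case analysis on $d\bmod 4$), at the price of the arithmetic inputs $|\mathcal{O}_{\mathbb{K}}/(q)|=|q|^2$ and the covolume-index relation; the paper's version is a direct transcription of \cite[Lemma 3.3]{AGK} and needs only the discreteness of $\mathcal{O}_{\mathbb{K}}$ in $\mathbb{C}$. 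Your initial conjugation step $(a,b,c)=(\overline{\alpha},\overline{\beta},\overline{\gamma})$, $w=\overline{z}$ is sound (the paper handles the same issue by solving $a\overline{p}+b\overline{r}+c\overline{q}=0$ and conjugating the relation at the end), and both arguments land on the same constant $\sqrt[4]{4d}$ via $d_{\mathbb{K}}\leq 4d$.
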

\begin{proof}
Denote $p=p_1+ip_2$, where $p_1\in\mathbb{Z}+\mathbb{Z}\dfrac{{d_{\mathbb{K}}}}{2},p_2\in\mathbb{Z}\dfrac{\sqrt{d_{\mathbb{K}}}}{2}$, etc.  To apply Theorem \ref{MinkRLF}, we consider the $6\times 6$ matrix
\[ L=\begin{pmatrix}
      p_1&p_2&r_1&r_2&q_1&q_2\\
      -p_2&p_1&-r_2&r_1&-q_2&q_1\\
      1&&&&&\\
      &1&&&& \\
      z_1&-z_2&1&&&\\
      z_2&z_1&&1&&
     \end{pmatrix}
\]
whose rows correspond to six real linear forms, and whose determinant is $q_1^2+q_2^2=|q|^2\neq 0$.

As in Proposition \ref{MinkCLF}, we set $\Lambda=\Lambda_{\mathbb{K}}$, and set $n=3$.  Take positive numbers
\[ (\kappa_1,\kappa_2)=\begin{cases} (1,\sqrt{d}) & \text{if}~d\equiv 1,2\mod 4,\\ \left( \frac{1}{2},\frac{\sqrt{d}}{2}\right) & \text{if}~d\equiv 3\mod 4,\end{cases} \quad \kappa_3=\kappa_4=\sqrt[4]{4d}|q|^{\lambda},\quad \kappa_5=\kappa_6=\sqrt[4]{4d}|q|^{1-\lambda}. \]
In either case, we have $\prod_{j=1}^6\kappa_j > \frac{1}{8}{d_{\mathbb{K}}^{\frac{3}{2}}}|q|^2=\mathrm{covol}(\Lambda_{\mathbb{K}})\det L$.  Thus an argument similar to that in the proof of Proposition \ref{MinkCLF} applies to yield existence of a solution $(a,b,c)\in\mathcal{O}_{\mathbb{K}}^3$ such that
\begin{align*}
 |\mathrm{Re}(a\overline{p}+b\overline{r}+c\overline{q})|&<\begin{cases} 1 & \text{if}~d\equiv 1,2\mod 4,\\ \frac{1}{2} & \text{if}~d\equiv 3\mod 4,\end{cases} \\
 |\mathrm{Im}(a\overline{p}+b\overline{r}+c\overline{q})|&<\begin{cases} \sqrt{d} & \text{if}~d\equiv 1,2\mod 4,\\ \frac{\sqrt{d}}{2} & \text{if}~d\equiv 3\mod 4,\end{cases} \\
|a|\leq\sqrt[4]{4d}|q|^\lambda,\quad |b+za|&\leq\sqrt[4]{4d}|q|^{1-\lambda}.
\end{align*}
Identical to the argument in \cite[Lemma 3.3]{AGK}, we obtain $(a,b)\neq(0,0)$.
\end{proof}
\subsection{Vector Products in \texorpdfstring{$\mathbb{C}^3$}{C3}}

Recall that the standard Hermitian inner product on $\mathbb{C}^3$ is defined by
\begin{equation}\label{hermprod} \langle \mathbf{v},\mathbf{w} \rangle =\bar{v}_1w_1+\bar{v}_2w_2+\bar{v}_3w_3 \quad\text{for}~\mathbf{v}=(v_1,v_2,v_3),\mathbf{w}=(w_1,w_2,w_3)~\text{in}~\mathbb{C}^3. \end{equation}

Notationally analogous to the definitions on $\mathbb{R}^3$, we set the \emph{dot product} of $\mathbf{v},\mathbf{w}\in\mathbb{C}^3$ as
\[ \mathbf{v}\cdot\mathbf{w} = \langle \mathbf{v},\mathbf{w} \rangle. \]
We define the \emph{cross product} of $\mathbf{v},\mathbf{w}\in\mathbb{C}^3$ by
\begin{equation}\label{crossprod}
 \mathbf{v}\times\mathbf{w}=(\overline{v_2w_3}-\overline{v_3w_2}, \overline{v_3w_1}-\overline{v_1w_3}, \overline{v_1w_2}-\overline{v_2w_1}).
\end{equation}
The above definition satisfies conjugate-linearity in each of the components, antisymmetry, distributivity, self-annihilation, and the scalar triple product identity
\[ \mathbf{u}\cdot(\mathbf{v}\times\mathbf{w}) = \mathbf{v}\cdot(\mathbf{w}\times\mathbf{u}) = \mathbf{w}\cdot(\mathbf{u}\times\mathbf{v})~\text{for~all}~\mathbf{u},\mathbf{v},\mathbf{w}\in\mathbb{C}^3. \]
Moreover, it can be easily seen that the following vector triple product identity holds:
\begin{equation}\label{vec3prod}
 \mathbf{u}\times(\mathbf{v}\times\mathbf{w})=(\mathbf{u}\cdot\mathbf{w})\mathbf{v}-(\mathbf{u}\cdot\mathbf{v})\mathbf{w}~\text{for~all}~\mathbf{u},\mathbf{v},\mathbf{w}\in\mathbb{C}^3.
\end{equation}
We will use \eqref{hermprod}, \eqref{crossprod}, and \eqref{vec3prod} in the technical preparation in \S3.

\section{A Weighted Approximation in \texorpdfstring{$\mathbb{C}^3$}{C3}}

In order to approximate $\mathbf{z}=(\xi,\eta,\zeta)\in\mathbb{C}^3$ by triples of rational elements from $\mathbb{K}$, we employ the following terminology analogous to \cite{AGK}: 

Consider the subgroups
\begin{equation}\label{U}
 U=\left\{ u_{\mathbf{z}}~:~\mathbf{z}=(\xi,\eta,\zeta)\in\mathbb{C}^3 \right\} ,\quad\text{where}\quad u_{\mathbf{z}}:=
 \begin{pmatrix}
  1 & \zeta & \xi \\ 0& 1 & \eta \\ 0& 0& 1
 \end{pmatrix}
\end{equation}
and
\begin{equation}\label{U-tilde}
 \tilde{U}=\left\{ \tilde{u}_{\mathbf{z}}~:~\mathbf{z}=(\xi,\eta,\zeta)\in\mathbb{C}^3 \right\} ,\quad\text{where}\quad \tilde{u}_{\mathbf{z}}:=
 \begin{pmatrix}
  1 & 0 & \eta \\ \zeta & 1 & \xi \\ 0&0 & 1
 \end{pmatrix}
\end{equation}
of $G:=\mathrm{SL}_3(\mathbb{C})$, and let $\Gamma:=\mathrm{SL}_3(\mathcal{O}_{\mathbb{K}})$.  Note that the subgroups $U$ and $\tilde{U}$ are conjugate to each other through the permutation matrix
\begin{equation}\label{perm}
 P=\begin{pmatrix}
    0&1&0\\1&0&0\\0&0&1
   \end{pmatrix}.
\end{equation}
The mappings defined by
\begin{equation}\label{u_z-inv}
(\xi,\eta,\zeta)=\mathbf{z}\mapsto u_{\mathbf{z}}^{-1}=u_{(0,0,-\zeta)}u_{(-\xi,-\eta,0)} \end{equation}
and \begin{equation}\label{u_z-inv-tilde}
(\xi,\eta,\zeta)=\mathbf{z}\mapsto \tilde{u}_{\mathbf{z}}^{-1}=\tilde{u}_{(0,0,-\zeta)}\tilde{u}_{(-\xi,-\eta,0)} \end{equation}
are biholomorphic embeddings of $\mathbb{C}^3$ into $U$ and $\tilde{U}$, respectively (cf. \cite{Goldman,Wells}).  For a weight $0\leq\lambda\leq 1$, define the (normalized) diagonal flow  
\begin{equation}\label{F-lambdaplus}
F_\lambda^+=\{g_{\lambda}(t)~:~t\geq 0\}, 
\end{equation}
 where \begin{equation}\label{gt}
 g_{\lambda}(t)=\mathrm{diag}(e^{\lambda t},e^{(1-\lambda)t},e^{-t}).
 \end{equation}
Then, for $\mathbf{v}=(p,r,q),\tilde{\mathbf{v}}=(r,p,q)\in\mathcal{O}_{\mathbb{K}}^3$, the entries of the column vectors in $F_\lambda^+u_{\mathbf{z}}^{-1}\mathbf{v}^T$ (resp. $F_\lambda^+\tilde{u}_{\mathbf{z}}^{-1}\tilde{\mathbf{v}}^T$) correspond to a $\lambda$-weighted approximation of $\mathbf{z}$ relative to $\mathbf{v}$ for $\lambda>\frac{1}{2}$ (resp. relative to $\tilde{\mathbf{v}}$ for $\lambda<\frac{1}{2}$) as described below.

For $\varepsilon>0$ and $\mathbf{v}=(p,r,q)\in\tilde{\mathcal{O}}_{\mathbb{K}}^3:=\mathcal{O}_{\mathbb{K}}\times \mathcal{O}_{\mathbb{K}}\times ( \mathcal{O}_{\mathbb{K}}\setminus\{0\} )$, denote
\[ 
\Delta_{\varepsilon,\mathbb{K},\lambda}(\mathbf{v}) := \left\{ (\xi,\eta,\zeta)\in\mathbb{C}^3:\left| \xi-\dfrac{p}{q}-\zeta\left( \eta-\dfrac{r}{q} \right) \right|<\dfrac{\varepsilon\sqrt[4]{d}}{|q|^{1+\lambda}}, \left| \eta-\dfrac{r}{q} \right|<\dfrac{\varepsilon\sqrt[4]{d}}{|q|^{2-\lambda}} \right\}.
\]
Define
\begin{equation}\label{bad-epsilon}
 \mathrm{B}_{\varepsilon,\mathbb{K}}(\lambda) := \mathbb{C}^3\setminus  \bigcup_{\mathbf{v}\in \tilde{\mathcal{O}}^3_{\mathbb{K}}} \Delta_{\varepsilon,\mathbb{K},\lambda}(\mathbf{v})
\end{equation}
and
\begin{equation}\label{bad-lambda}
 \mathrm{B}_{\mathbb{K}}(\lambda) := \bigcup_{\varepsilon>0} \mathrm{B}_{\varepsilon,\mathbb{K}}(\lambda).
\end{equation}
The boundedness of the trajectories $F_\lambda^+u_{\mathbf{z}}^{-1}\Gamma$ in $G/\Gamma$ can be characterized analogous to \cite[Lemma 3.2]{AGK} as follows:
 \begin{proposition}\label{AGKl3.2}
 Let $\lambda\geq\frac{1}{2}$.  A vector $\mathbf{z}=(\xi,\eta,\zeta)\in\mathbb{C}^3$ is in $\mathrm{B}_{\mathbb{K}}(\lambda)$ if and only if the trajectory $F_{{\lambda}}^+u_{\mathbf{z}}^{-1}\Gamma$ is bounded in $G/\Gamma$; that is, there is $\varepsilon=\varepsilon(\mathbf{z})>0$ such that
 \begin{equation}\label{AGKe3.3}
\max \left\{ |q|^{\lambda}|q \xi-p-\zeta(q \eta-r)|, |q|^{1-\lambda}|q \eta-r| \right\} \geq\varepsilon
 \end{equation}
 for all $(p,r,q)\in \tilde{\mathcal{O}}^3_{\mathbb{K}}$.
\end{proposition}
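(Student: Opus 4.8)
I expect to prove Proposition \ref{AGKl3.2} by the Dani correspondence, following \cite[Lemma 3.2]{AGK} with the real modulus replaced by the complex one and $\mathbb{Z}^3$ by $\mathcal{O}_{\mathbb{K}}^3$. Identify $G/\Gamma$ with the space of lattices $g\mathcal{O}_{\mathbb{K}}^3$ in $\mathbb{C}^3\simeq\mathbb{R}^6$ (all of the same covolume, since $|\det_{\mathbb{R}}g|=|\det_{\mathbb{C}}g|^2=1$ for $g\in G$). By Mahler's compactness criterion, $F_{\lambda}^+u_{\mathbf{z}}^{-1}\Gamma$ is bounded in $G/\Gamma$ if and only if there is $\varepsilon'>0$ with $\|g_{\lambda}(t)u_{\mathbf{z}}^{-1}\mathbf{v}^T\|\geq\varepsilon'$ for all $t\geq 0$ and all $\mathbf{v}\in\mathcal{O}_{\mathbb{K}}^3\setminus\{0\}$. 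A direct computation from \eqref{U} and \eqref{gt} gives, for $\mathbf{v}=(p,r,q)$,
\[ g_{\lambda}(t)u_{\mathbf{z}}^{-1}\mathbf{v}^T=\bigl(-e^{\lambda t}\bigl(q\xi-p-\zeta(q\eta-r)\bigr),\ -e^{(1-\lambda)t}(q\eta-r),\ e^{-t}q\bigr), \]
so that, writing $A=|q\xi-p-\zeta(q\eta-r)|$ and $B=|q\eta-r|$,
\[ \bigl\|g_{\lambda}(t)u_{\mathbf{z}}^{-1}\mathbf{v}^T\bigr\|^2=e^{2\lambda t}A^2+e^{2(1-\lambda)t}B^2+e^{-2t}|q|^2. \]
It then remains to match ``$\|\cdot\|\geq\varepsilon'$ for all $t\geq 0$ and all nonzero $\mathbf{v}$'' with \eqref{AGKe3.3}; the constant $\sqrt[4]{d}$ occurring in $\Delta_{\varepsilon,\mathbb{K},\lambda}(\mathbf{v})$ is immaterial here since it can be absorbed into $\varepsilon$.

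For the implication ``$\mathbf{z}\in\mathrm{B}_{\mathbb{K}}(\lambda)\Rightarrow$ bounded'', I would assume \eqref{AGKe3.3} holds for some $\varepsilon\in(0,1)$ and bound the displayed norm below uniformly. If $q=0$, then $(p,r)\neq(0,0)$, and discreteness of $\mathcal{O}_{\mathbb{K}}$ (so $|p|,|r|\geq 1$ when nonzero) together with $\lambda,1-\lambda\geq 0$ yields $\|g_{\lambda}(t)u_{\mathbf{z}}^{-1}\mathbf{v}^T\|\geq 1$. If $q\neq 0$: when $e^{-t}|q|\geq\varepsilon$ the third term already suffices; otherwise $e^t>|q|/\varepsilon$, and feeding this into whichever of $|q|^{\lambda}A\geq\varepsilon$ or $|q|^{1-\lambda}B\geq\varepsilon$ holds forces $e^{2\lambda t}A^2>\varepsilon^{2-2\lambda}$ or $e^{2(1-\lambda)t}B^2>\varepsilon^{2\lambda}$, each a positive constant depending only on $\varepsilon$ and $\lambda$ (the hypothesis $\lambda\in[\tfrac12,1]$ keeps these bounds uniform). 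Taking the minimum of the resulting constants supplies $\varepsilon'$.

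For the converse I argue by contraposition. If $\mathbf{z}\notin\mathrm{B}_{\mathbb{K}}(\lambda)$, then for every $n$ there is $\mathbf{v}_n=(p_n,r_n,q_n)\in\tilde{\mathcal{O}}_{\mathbb{K}}^3$ with $|q_n|^{\lambda}A_n<\delta_n$ and $|q_n|^{1-\lambda}B_n<\delta_n$, where $\delta_n\to 0$ (here $A_n,B_n$ are the quantities above for $\mathbf{v}_n$). If $(|q_n|)_n$ is bounded, then $q_n$ lies in a finite set; passing to a subsequence and using discreteness of $\mathcal{O}_{\mathbb{K}}$, the $\mathbf{v}_n$ are eventually a fixed $(p,r,q)$ with $q\xi=p$ and $q\eta=r$, whence $g_{\lambda}(t)u_{\mathbf{z}}^{-1}(p,r,q)^T=(0,0,e^{-t}q)\to 0$, so the orbit is unbounded. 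Otherwise $|q_n|\to\infty$, and I will choose $t_n\geq 0$ so that each of $e^{\lambda t_n}A_n$, $e^{(1-\lambda)t_n}B_n$, $e^{-t_n}|q_n|$ is $O(\delta_n^{\beta})$ for a fixed $\beta\in(0,\tfrac{1}{1+\lambda})$: the condition $e^{-t}|q_n|<\delta_n^{\beta}$ is a lower bound on $t$, while $e^{\lambda t}A_n<\delta_n^{\beta}$ and $e^{(1-\lambda)t}B_n<\delta_n^{\beta}$ are upper bounds, and inserting $A_n<\delta_n|q_n|^{-\lambda}$, $B_n<\delta_n|q_n|^{-(1-\lambda)}$ shows, using $\lambda\in[\tfrac12,1]$, that the resulting window of admissible $t$ is nonempty for all large $n$. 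Any $t_n$ in this window gives $\|g_{\lambda}(t_n)u_{\mathbf{z}}^{-1}\mathbf{v}_n^T\|\to 0$, so the orbit escapes to infinity; the degenerate cases $A_n=0$ or $B_n=0$ are handled by the same balancing with the vanishing term dropped.

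The step I expect to be the main obstacle is the last one: verifying that the window of admissible $t_n$ is genuinely nonempty. This is exactly where the two exponents $\lambda$ and $1-\lambda$ must be played off against each other, and it is the complex analogue of the corresponding estimate in the proof of \cite[Lemma 3.2]{AGK}; everything else is bookkeeping on the identity for $\|g_{\lambda}(t)u_{\mathbf{z}}^{-1}\mathbf{v}^T\|^2$ above.
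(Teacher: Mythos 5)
Your proposal is correct and follows essentially the same route as the paper: both reduce the statement to the number-field Mahler compactness criterion of Kleinbock--Shi--Tomanov and then match the three-term maximum $\max\{e^{\lambda t}A,\,e^{(1-\lambda)t}B,\,e^{-t}|q|\}$ against \eqref{AGKe3.3}, with the forward implication handled by the same exponent bookkeeping. The only divergence is in the direction ``bounded $\Rightarrow$ \eqref{AGKe3.3}'': you argue by contraposition via a sequence $\mathbf{v}_n$ and a nonempty window of times $t_n$ (which does work, and your constraint $\beta<\tfrac{1}{1+\lambda}$ is exactly the binding one for $\lambda\geq\tfrac12$), whereas the paper gives a shorter direct argument, choosing for each fixed $(p,r,q)$ the time $t_0$ with $2e^{-t_0}|q|=\delta$ and reading off $\varepsilon=\delta^{1+\lambda}/2^{\lambda}$.
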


\begin{proof}
Let $g_{\lambda}(t)$ be given by \eqref{gt}.  The number field analogue of Mahler's compactness criterion \cite[Theorem 1.1]{KST} implies that $F_{{\lambda}}^+u_{\mathbf{z}}^{-1}\Gamma$ is bounded if and only if there exists $0<\delta\leq 1$ such that, for any $t\geq 0$ and $(p,r,q)\in \tilde{\mathcal{O}}^3_{\mathbb{K}}$,
\[ \left\| g_{\lambda}(t)u_{\mathbf{z}}^{-1} \begin{pmatrix} p \\ r \\ q \end{pmatrix} \right\|_\infty\geq\delta; \]
that is, if and only if
\begin{equation}\label{AGKe3.4}
 \max \left\{ e^{\lambda t}|p-q\xi-\zeta(r-q\eta)|, e^{(1-\lambda)t}|r-q\eta|,e^{-t}|q| \right\} \geq\delta.
\end{equation}
Suppose first that \eqref{AGKe3.3} holds for some $\varepsilon>0$.  We will prove that \eqref{AGKe3.4} is true for \[\delta=\min\{ 1, \varepsilon^{\frac{1}{1+\lambda}},\varepsilon^{\frac{1}{2-\lambda}} \}.\]
Suppose on the contrary that there exist $(p,r,q)\in \tilde{\mathcal{O}}^3_{\mathbb{K}}$ and $t\geq 0$ for which
\[ e^{\lambda t}|p-q\xi-\zeta(r-q\eta)|<\delta, e^{(1-\lambda)t}|r-q\eta|<\delta,e^{-t}|q| <\delta. \]
Since $q\neq 0$, we have
\begin{align*}
 & \max \left\{ |q|^{\lambda}|q \xi-p-\zeta(q \eta-r)|, |q|^{1-\lambda}|q \eta-r| \right\} \\
 =~& \max \left\{ |e^{-t}q|^{\lambda}e^{\lambda t}|q \xi-p-\zeta(q \eta-r)|, |e^{-t}q|^{1-\lambda}e^{(1-\lambda)t}|q \eta-r| \right\} \\
 <~& \delta \max \left\{ \delta^{\lambda},\delta^{1-\lambda} \right\} = \max \left\{ \delta^{1+\lambda},\delta^{2-\lambda} \right\} \leq\varepsilon,
\end{align*}
contradicting \eqref{AGKe3.3}.

Conversely, suppose that there exists $0<\delta\leq 1$ so that \eqref{AGKe3.4} is true.  For a given $(p,r,q)\in \tilde{\mathcal{O}}^3_{\mathbb{K}}$, let $t_0\geq 0$ satisfy $\delta = 2e^{-t_0}|q|$.  Then, from \eqref{AGKe3.4}, we have
\begin{align*}
 & \max \left\{ |q|^{\lambda}|q \xi-p-\zeta(q \eta-r)|, |q|^{1-\lambda}|q \eta-r| \right\} \\
 \geq~& \left( \dfrac{\delta}{2} \right)^{\lambda} \max \left\{ e^{\lambda t_0}|q \xi-p-\zeta(q \eta-r)|, e^{(1-\lambda)t_0}|q \eta-r| \right\} \\
 \geq~& \delta \left( \dfrac{\delta}{2} \right)^{\lambda},
\end{align*}
so that \eqref{AGKe3.3} is true for $\varepsilon= \dfrac{\delta^{1+\lambda}}{2^\lambda}$.  This completes the proof.
\end{proof}

\begin{remark}
 The case $\lambda<\frac{1}{2}$ is readily obtained through conjugation by the permutation matrix $P$ defined in \eqref{perm}, with $u_{\mathbf{z}},\mathbf{v}$ replaced by $\tilde{u}_{\mathbf{z}},\tilde{\mathbf{v}}$.
 \end{remark}

Let $B=\mathrm{cl}(B((\xi_{B},\eta_{B},\zeta_{B}),\rho(B))\subset\mathbb{C}^3$.  By Corollary \ref{AGKl3.3}, the collection of vectors
\[ \mathscr{W}(B,\mathbf{v}) := \begin{matrix} \{ \mathbf{w}=(a,b,c)\in\mathcal{O}_{\mathbb{K}}^3:(a,b)\neq(0,0), \langle\mathbf{v},{\mathbf{w}}\rangle={0}, \\ |a|<\sqrt[4]{4d}|q|^\lambda,|b+\zeta_{B}a|<\sqrt[4]{4d}|q|^{1-\lambda}+\sqrt{\rho(B)} \} \end{matrix} \]
is nonempty for a given $\mathbf{v}=(p,r,q)\in \tilde{\mathcal{O}}^3_{\mathbb{K}}$, where $\langle\cdot,\cdot\rangle$ is defined in \eqref{hermprod}.  Denote the shortest distal vector of $\mathscr{W}(B,\mathbf{v})$ (that is, the least-norm vector among those closest to the boundary of the bounding region of $\mathscr{W}(B,\mathbf{v})$) by 
\begin{equation}\label{w}
\mathbf{w}(B,\mathbf{v}):=(a(B,\mathbf{v}),b(B,\mathbf{v}),c(B,\mathbf{v}))
\end{equation}
such that
\begin{equation}\label{AGKe3.5}
 \max\{ |a(B,\mathbf{v})|,|b(B,\mathbf{v})+\zeta_{B}a(B,\mathbf{v})| \}=\min\left\{ \max_{(a,b,c)\in\mathscr{W}(B,\mathbf{v})}\{ |a|,|b+\zeta_{B}a| \} \right\}.
\end{equation}

\begin{lemma}\label{AGKl3.4}
Define the height of $\mathbf{v}$ relative to $B$ by
\begin{equation}\label{AGK_height} H_B(\mathbf{v}):=|q|\max\{ |a(B,\mathbf{v})|,|b(B,\mathbf{v})+\zeta_{B}a(B,\mathbf{v})| \}. \end{equation} Then
\begin{equation}\label{AGKe3.6}
 |q|\leq H_B(\mathbf{v}) \leq \sqrt[4]{4d}|q|^{\max\{1+\lambda,2-\lambda\}}
\end{equation}
for any $\mathbf{v}=(p,r,q)\in \tilde{\mathcal{O}}^3_{\mathbb{K}}$ and any closed ball $B\subset\mathbb{C}^3$.
\end{lemma}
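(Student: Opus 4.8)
The plan is to read off both inequalities directly from the definition of the shortest distal vector $\mathbf{w}(B,\mathbf{v})=(a(B,\mathbf{v}),b(B,\mathbf{v}),c(B,\mathbf{v}))$ together with Corollary \ref{AGKl3.3}. Throughout I will use the facts recorded just above the statement: $\mathbf{w}(B,\mathbf{v})$ lies in $\mathscr{W}(B,\mathbf{v})$, so its coordinates are in $\mathcal{O}_{\mathbb{K}}$ and $(a(B,\mathbf{v}),b(B,\mathbf{v}))\neq(0,0)$; and by \eqref{AGKe3.5} it minimizes $\max\{|a|,|b+\zeta_B a|\}$ over $\mathscr{W}(B,\mathbf{v})$.

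For the lower bound $H_B(\mathbf{v})\geq|q|$, I would use that any nonzero $\alpha\in\mathcal{O}_{\mathbb{K}}$ satisfies $|\alpha|^2=N_{\mathbb{K}}(\alpha)\in\mathbb{Z}_{>0}$, hence $|\alpha|\geq 1$. If $a(B,\mathbf{v})\neq 0$ this already gives $|a(B,\mathbf{v})|\geq 1$; if instead $a(B,\mathbf{v})=0$ then $b(B,\mathbf{v})\neq 0$ and $|b(B,\mathbf{v})+\zeta_B a(B,\mathbf{v})|=|b(B,\mathbf{v})|\geq 1$. In either case $\max\{|a(B,\mathbf{v})|,|b(B,\mathbf{v})+\zeta_B a(B,\mathbf{v})|\}\geq 1$, and multiplying by $|q|$ yields the claim.

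For the upper bound, I would invoke Corollary \ref{AGKl3.3} with $z=\zeta_B$ to produce a vector $(a,b,c)\in\mathcal{O}_{\mathbb{K}}^3$ with $(a,b)\neq(0,0)$, $\langle\mathbf{v},(a,b,c)\rangle=0$, $|a|\leq\sqrt[4]{4d}\,|q|^{\lambda}$ and $|b+\zeta_B a|\leq\sqrt[4]{4d}\,|q|^{1-\lambda}$; this is the vector witnessing that $\mathscr{W}(B,\mathbf{v})$ is nonempty. For it one has $\max\{|a|,|b+\zeta_B a|\}\leq\sqrt[4]{4d}\max\{|q|^{\lambda},|q|^{1-\lambda}\}=\sqrt[4]{4d}\,|q|^{\max\{\lambda,1-\lambda\}}$, so by the minimality \eqref{AGKe3.5} of $\mathbf{w}(B,\mathbf{v})$ the same bound holds with $a(B,\mathbf{v}),b(B,\mathbf{v})$ in place of $a,b$. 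Multiplying by $|q|$ and using $1+\max\{\lambda,1-\lambda\}=\max\{1+\lambda,2-\lambda\}$ gives $H_B(\mathbf{v})\leq\sqrt[4]{4d}\,|q|^{\max\{1+\lambda,2-\lambda\}}$.

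I do not anticipate a genuine obstacle here: the argument is bookkeeping around \eqref{AGKe3.5} and Corollary \ref{AGKl3.3}, plus the elementary fact $|\alpha|\geq 1$ for nonzero $\alpha\in\mathcal{O}_{\mathbb{K}}$. The only point deserving an explicit word is why the vector delivered by Corollary \ref{AGKl3.3} genuinely belongs to $\mathscr{W}(B,\mathbf{v})$, i.e.\ why the non-strict inequalities of the corollary may be read as the strict ones in the definition of $\mathscr{W}(B,\mathbf{v})$: the $|b+\zeta_B a|$-bound has the slack $\sqrt{\rho(B)}$ built into the definition, and the strict $|a|$-bound is inherited from the strict inequalities in the Minkowski argument underlying Corollary \ref{AGKl3.3} — this is already the content of the sentence preceding the lemma, and keeping the weight exponents straight is the only remaining care needed.
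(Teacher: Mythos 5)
Your argument is correct and is exactly the one the paper intends: the paper omits the proof, deferring to \cite[Lemma 2.4]{AGK} with Corollary \ref{AGKl3.3} in place of their Lemma 3.3, and your write-up supplies precisely those details (integrality of $\mathcal{O}_{\mathbb{K}}$ plus $(a,b)\neq(0,0)$ for the lower bound; the witness from Corollary \ref{AGKl3.3} with $z=\zeta_B$ fed through the minimality \eqref{AGKe3.5} for the upper bound, which correctly avoids the $\sqrt{\rho(B)}$ slack present in the definition of $\mathscr{W}(B,\mathbf{v})$). No gaps.
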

The proof is analogous to \cite[Lemma 2.4]{AGK} with Corollary \ref{AGKl3.3} in place of Lemma 3.3, and is thus omitted.

For the rest of the section, we will assume that $\lambda\geq\frac{1}{2}$.  The case $\lambda<\frac{1}{2}$ follows on appropriately substituting $\lambda$ by $1-\lambda$.

Now, let $B_0=\mathrm{cl}(B((\xi_0,\eta_0,\zeta_0),\rho_0))\subset\mathbb{C}^3$ be such that $\rho_0\leq 1$.  Set
\begin{equation}\label{AGKe3.7}
 \kappa:=1+\max\{ |\xi_0|,|\eta_0|,|\zeta_0| \}.
\end{equation}
With $0<\beta<1$, choose positive numbers $R$ and $\varepsilon$ satisfying
\begin{equation}\label{AGKe3.8}
 R\geq\max\left\{ 10^8d\kappa^6,\dfrac{4}{\beta} \right\}
\end{equation}
and
\begin{equation}\label{AGKe3.9}
 \varepsilon\leq\dfrac{\rho_0}{200d\kappa^3R^{10}},
\end{equation}
where $d$ arises from the imaginary quadratic number field $\mathbb{K}=\mathbb{Q}[\sqrt{-d}]$.  We will use the parameters $R,\varepsilon$ and the height function $H$ to define the families $\{\mathscr{B}_n\}$ of closed-ball subsets of $B_0$ and appropriate sets of integral vectors $\mathscr{V}_B$ as below:
\begin{equation}\label{closedballfamily}
 \mathscr{B}_0=\{B_0\}, \quad \mathscr{B}_n:= \left\{ B\subset B_0:\dfrac{\beta\rho_0}{R^n}<\rho(B)\leq\dfrac{\rho_0}{R^n} \right\};
\end{equation}
setting
\[ H_n=\dfrac{\varepsilon\kappa}{\rho_0}R^n\quad(n\geq 0), \]
define
\begin{equation}\label{V_B}
 \mathscr{V}_B:=\{ \mathbf{v}\in \tilde{\mathcal{O}}^3_{\mathbb{K}}:3{d}H_n\leq H_B(\mathbf{v})\leq 2H_{n+1} \}\quad\text{if~there~is}~n\in\mathbb{N}~\text{such~that}~B\in\mathscr{B}_n.
\end{equation}
The sets $\mathscr{V}_B$ are well-defined: in view of \eqref{AGKe3.8}, we have $\mathscr{B}_m\bigcap\mathscr{B}_n = \emptyset$ for $m\neq n$, so that any $B\subset B_0$ belongs to at most one $\mathscr{B}_n$.  From Lemma \ref{AGKl3.4}, it follows that if $\mathbf{v}=(p,r,q)\in \mathscr{V}_B$, then
\begin{equation}\label{AGKe3.11}
 \dfrac{\sqrt{3d}}{\sqrt[4]{2}}H_n^{\frac{1}{1+\lambda}}\leq |q| \leq 2H_{n+1}.
\end{equation}
We define
\begin{equation}\label{V_B,k}
 \left. \begin{matrix}
 \mathscr{V}_{B,1}:=\{ (p,r,q)\in\mathscr{V}_B:|q|\in \frac{\sqrt{3d}}{\sqrt[4]{2}}H_n^{\frac{1}{1+\lambda}}[1,R^8] \}, \quad\text{and}\\
\mathscr{V}_{B,k}:=\{ (p,r,q)\in\mathscr{V}_B: |q|\in \frac{\sqrt{3d}}{\sqrt[4]{2}}H_n^{\frac{1}{1+\lambda}}[R^{2k+4},R^{2k+6}] \} \quad(k\geq 2).
\end{matrix} \right\}
\end{equation}

Exactly analogous to \cite[Lemma 3.5]{AGK}, the following partitioning holds:

\begin{lemma}\label{AGKl3.5}
 If $B\in\mathscr{B}_n$, then $\mathscr{V}_B=\bigcup_{k=1}^n \mathscr{V}_{B,k}$.
\end{lemma}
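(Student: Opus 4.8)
Since by definition \eqref{V_B,k} each $\mathscr{V}_{B,k}$ is a subset of $\mathscr{V}_B$, the inclusion $\bigcup_{k=1}^n \mathscr{V}_{B,k} \subseteq \mathscr{V}_B$ is immediate, and all the content lies in the reverse inclusion. (When $n=0$ both sides are empty: the constraint $H_B(\mathbf{v}) \le 2H_1$ together with $H_B(\mathbf{v}) \ge |q| \ge 1$ from Lemma~\ref{AGKl3.4} is already violated by the smallness of $\varepsilon$ in \eqref{AGKe3.9}, so I assume $n \ge 1$.) The plan is to fix $B \in \mathscr{B}_n$ and $\mathbf{v}=(p,r,q)\in\mathscr{V}_B$ and to introduce the normalized scale
\[ t := \frac{|q|}{\frac{\sqrt{3d}}{\sqrt[4]{2}}\, H_n^{1/(1+\lambda)}}. \]
Reading off \eqref{V_B,k}, one has $\mathbf{v}\in\mathscr{V}_{B,1}$ when $t\in[1,R^8]$ and $\mathbf{v}\in\mathscr{V}_{B,k}$ ($k\ge2$) when $t\in[R^{2k+4},R^{2k+6}]$; since the intervals $[1,R^8]$ and $[R^{2k+4},R^{2k+6}]$ for $2\le k\le n$ share consecutive endpoints, their union is exactly $[1,R^{2n+6}]$. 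Thus the lemma reduces to the single claim that $t\in[1,R^{2n+6}]$ for every $\mathbf{v}\in\mathscr{V}_B$.

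The lower bound $t\ge 1$ is precisely the left-hand inequality of \eqref{AGKe3.11}. For the upper bound I would use $|q|\le H_B(\mathbf{v})\le 2H_{n+1}=2(\varepsilon\kappa/\rho_0)R^{n+1}$, from Lemma~\ref{AGKl3.4} and the definition \eqref{V_B} of $\mathscr{V}_B$, together with $H_n^{1/(1+\lambda)}=(\varepsilon\kappa/\rho_0)^{1/(1+\lambda)}R^{n/(1+\lambda)}$; after substituting and cancelling the common powers of $\varepsilon\kappa/\rho_0$ and of $R$, the inequality $t\le R^{2n+6}$ reduces to the scalar estimate
\[ 2\left(\frac{\varepsilon\kappa}{\rho_0}\right)^{\lambda/(1+\lambda)} \le \frac{\sqrt{3d}}{\sqrt[4]{2}}\, R^{\,n+5+n/(1+\lambda)}. \]
Here $\lambda\in[\tfrac12,1]$ gives $\lambda/(1+\lambda)\in(0,1)$, while \eqref{AGKe3.9} gives $\varepsilon\kappa/\rho_0\le 1/(200d\kappa^2R^{10})\le 1$, so the left side is at most $2$; on the right side, $n\ge1$ and \eqref{AGKe3.8} make the exponent of $R$ at least $5$, so the right side exceeds $2$ with an enormous margin. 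This establishes $t\in[1,R^{2n+6}]$ and hence places $\mathbf{v}$ in $\mathscr{V}_{B,k}$ for some $1\le k\le n$, completing the argument.

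This is the complex-field transcription of \cite[Lemma~3.5]{AGK}, and I do not anticipate a genuine obstacle once Lemma~\ref{AGKl3.4} and \eqref{AGKe3.11} are available — the proof is essentially bookkeeping. The only points requiring mild care are: keeping $\max\{1+\lambda,2-\lambda\}=1+\lambda$ throughout, which is legitimate after the earlier reduction to $\lambda\ge\tfrac12$; tracking the exponent $n+5+n/(1+\lambda)$ correctly when the powers of $R$ are collected; and verifying that the generous choices of $R$ and $\varepsilon$ in \eqref{AGKe3.8}--\eqref{AGKe3.9} still leave room for the $d$-dependent constant $\sqrt{3d}/\sqrt[4]{2}$ that enters through the complex Minkowski estimate of Corollary~\ref{AGKl3.3} — which they do, with a large margin. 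It is also worth stressing in the write-up that the $|q|$-ranges in \eqref{V_B,k} were designed precisely so that successive intervals abut, which is exactly what prevents a value of $t$ in the admissible range from slipping between two consecutive families $\mathscr{V}_{B,k}$.
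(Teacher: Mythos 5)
Your proof is correct and takes essentially the same route the paper intends (the paper omits the argument entirely, deferring to \cite[Lemma 3.5]{AGK}): one reduces the claim to showing that $|q|\big/\bigl(\tfrac{\sqrt{3d}}{\sqrt[4]{2}}H_n^{1/(1+\lambda)}\bigr)$ lies in $[1,R^{2n+6}]$, the union of the abutting intervals defining $\mathscr{V}_{B,1},\dots,\mathscr{V}_{B,n}$, with the lower bound supplied by \eqref{AGKe3.11} and the upper bound by $|q|\leq 2H_{n+1}$ together with the size conditions \eqref{AGKe3.8}--\eqref{AGKe3.9}. Your bookkeeping of the exponent $n+5+n/(1+\lambda)$ and the treatment of the degenerate case $n=0$ are both correct.
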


Now we define subfamilies $\mathscr{B}'_n\subset\mathscr{B}_n$.  Set $\mathscr{B}'_0=\mathscr{B}_0$.  Whenever $\mathscr{B}'_{n-1}$ is defined, set
\begin{equation}\label{AGKe3.12}
 \mathscr{B}'_n := \left\{ B\in\mathscr{B}_n:B\subset B'~\text{for~some}~B'\in \mathscr{B}'_{n-1}~\text{and}~B\cap\bigcup_{\mathbf{v}\in\mathscr{V}_B} \Delta_{\varepsilon,\mathbb{K},\lambda}(\mathbf{v})=\emptyset \right\}.
\end{equation}

Propositions \ref{AGKl3.6} and \ref{AGKl3.7} describe the necessary properties of the modified subfamilies $\mathscr{B}'_n$:

\begin{proposition}\label{AGKl3.6}
 Let $n\geq 0$, $B\in\mathscr{B}'_n$, $\mathbf{v}=(p,r,q)\in \tilde{\mathcal{O}}^3_{\mathbb{K}}$.  If $|q|^{1+\lambda}\leq 2H_{n+1}$, then $\Delta_{\varepsilon,\mathbb{K},\lambda}(\mathbf{v})\,\cap\, B = \emptyset$.
\end{proposition}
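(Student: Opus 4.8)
The plan is to argue by induction on $n$, exploiting the recursive definition \eqref{AGKe3.12}: any $B\in\mathscr{B}'_n$ lies inside some $B'\in\mathscr{B}'_{n-1}$ and moreover satisfies $B\cap\bigcup_{\mathbf{v}'\in\mathscr{V}_B}\Delta_{\varepsilon,\mathbb{K},\lambda}(\mathbf{v}')=\emptyset$. For the base case $n=0$, where $B=B_0$, the hypothesis $|q|^{1+\lambda}\le 2H_1=2\varepsilon\kappa R/\rho_0$ is incompatible with $|q|\ge 1$ once \eqref{AGKe3.9} is used, so there is no such $\mathbf{v}$ and the statement is vacuously true.

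For the inductive step, fix $B\in\mathscr{B}'_n$ with parent $B'\in\mathscr{B}'_{n-1}$ and take $\mathbf{v}=(p,r,q)\in\tilde{\mathcal{O}}^3_{\mathbb{K}}$ with $|q|^{1+\lambda}\le 2H_{n+1}$. I would split on the size of the height $H_B(\mathbf{v})$ against the lower threshold $3dH_n$ of the window defining $\mathscr{V}_B$ in \eqref{V_B}. If $H_B(\mathbf{v})\ge 3dH_n$, I would verify that $\mathbf{v}\in\mathscr{V}_B$, the upper inequality $H_B(\mathbf{v})\le 2H_{n+1}$ being a matter of Lemma \ref{AGKl3.4} (which, for $\lambda\ge\frac{1}{2}$, bounds $H_B(\mathbf{v})$ by $\sqrt[4]{4d}\,|q|^{1+\lambda}$) together with the hypothesis; once $\mathbf{v}\in\mathscr{V}_B$, the disjointness in \eqref{AGKe3.12} immediately gives $\Delta_{\varepsilon,\mathbb{K},\lambda}(\mathbf{v})\cap B=\emptyset$. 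If instead $H_B(\mathbf{v})<3dH_n$, then Lemma \ref{AGKl3.4} gives $|q|\le H_B(\mathbf{v})<3dH_n$, and I would upgrade this to $|q|^{1+\lambda}\le 2H_n$ by unwinding the exponents (using $\lambda\ge\frac{1}{2}$, $|q|\ge 1$, $H_n=RH_{n-1}$, and \eqref{AGKe3.8}); the inductive hypothesis applied to $B'$, for which the relevant threshold is $2H_n=2H_{(n-1)+1}$, then yields $\Delta_{\varepsilon,\mathbb{K},\lambda}(\mathbf{v})\cap B'=\emptyset$, and $B\subseteq B'$ finishes the argument.

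The main obstacle will be the constant-chasing in the second case, i.e. extracting $|q|^{1+\lambda}\le 2H_n$ from $H_B(\mathbf{v})<3dH_n$ so that the inductive hypothesis can be invoked at the parent scale. This is precisely where the geometric factor $R^n$ in $H_n$, the multiplier $3d$, the exponent $1+\lambda$, and the normalizing constant $\sqrt[4]{4d}$ inherited from the complex Minkowski theorem (Proposition \ref{MinkCLF}, Corollary \ref{AGKl3.3}) must be reconciled; since the real case treated in \cite{AGK} carries no such constant, one has to check that the bounds \eqref{AGKe3.8}--\eqref{AGKe3.9} on $R$ and $\varepsilon$ were chosen lavishly enough to keep the induction closed. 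If the crude exponent estimate is not by itself enough, the fallback is to descend the whole chain $B=B^{(n)}\subseteq B^{(n-1)}\subseteq\cdots\subseteq B^{(0)}=B_0$ with $B^{(j)}\in\mathscr{B}'_j$: since $H_{B^{(0)}}(\mathbf{v})\ge|q|\ge 1>3dH_0$, the largest $m\le n$ with $H_{B^{(m)}}(\mathbf{v})\ge 3dH_m$ is well defined, and comparing $\mathscr{W}(B^{(m)},\mathbf{v})$ with $\mathscr{W}(B^{(m+1)},\mathbf{v})$ — tracking the shrinking slack $\sqrt{\rho(B)}$ in the definition of $\mathscr{W}(B,\mathbf{v})$ and the drift of the centre $\zeta_B$ from one level to the next — should trap $H_{B^{(m)}}(\mathbf{v})$ inside $[3dH_m,2H_{m+1}]$, so that $\mathbf{v}\in\mathscr{V}_{B^{(m)}}$ and hence $\Delta_{\varepsilon,\mathbb{K},\lambda}(\mathbf{v})\cap B=\emptyset$.
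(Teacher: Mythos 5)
Your overall skeleton --- induction on $n$ driven by the recursive definition \eqref{AGKe3.12}, with a vacuous base case because $2H_1<1\le|q|^{1+\lambda}$ --- is the right one, and is exactly what the paper's one-line deferral to \cite[Lemma 3.6]{AGK} intends. But the way you close the induction breaks at the decisive step. In your Case B you have $H_B(\mathbf{v})<3dH_n$, deduce $|q|<3dH_n$ from Lemma \ref{AGKl3.4}, and propose to ``upgrade'' this to $|q|^{1+\lambda}\le 2H_n$ so as to invoke the inductive hypothesis at the parent. That inequality goes the wrong way and cannot be rescued by any choice of $R$ and $\varepsilon$: since $|q|\ge 1$ and $\lambda>0$, an upper bound on $|q|$ yields no comparable upper bound on $|q|^{1+\lambda}$. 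Concretely, for large $n$ one has $H_n\gg1$, and $|q|$ of size about $3dH_n$ with $\lambda=1$ gives $|q|^{1+\lambda}\approx 9d^2H_n^2\gg 2H_n$. So the parent-scale hypothesis is simply unavailable in your Case B. The AGK-style induction splits the other way: one checks directly whether $|q|^{1+\lambda}\le 2H_n$ (if so, the parent $B'$ disposes of $\mathbf{v}$ with no reference to $H_B(\mathbf{v})$ at all), and only in the complementary range $2H_n<|q|^{1+\lambda}\le 2H_{n+1}$ does one try to show $\mathbf{v}\in\mathscr{V}_B$ so that \eqref{AGKe3.12} applies.

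A second, smaller issue sits in your Case A: from \eqref{AGKe3.6} and the hypothesis you only get $H_B(\mathbf{v})\le\sqrt[4]{4d}\,|q|^{1+\lambda}\le 2\sqrt[4]{4d}\,H_{n+1}$, which overshoots the threshold $2H_{n+1}$ in \eqref{V_B} by the factor $\sqrt[4]{4d}>1$ inherited from the complex Minkowski theorem; symmetrically, in the range $|q|^{1+\lambda}>2H_n$ the lower bound $H_B(\mathbf{v})\ge|q|>(2H_n)^{1/(1+\lambda)}$ does not by itself reach the threshold $3dH_n$ once $H_n>1$. These normalizations between \eqref{AGKe3.6}, \eqref{V_B} and the hypothesis $|q|^{1+\lambda}\le 2H_{n+1}$ must be reconciled before membership $\mathbf{v}\in\mathscr{V}_B$ can be asserted. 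Your fallback of descending the chain $B=B^{(n)}\subset\cdots\subset B^{(0)}$ and tracking how $\mathscr{W}(B^{(j)},\mathbf{v})$ and hence $H_{B^{(j)}}(\mathbf{v})$ evolve is the right instinct for the lower threshold (it is what the slack $\sqrt{\rho(B)}$ in the definition of $\mathscr{W}(B,\mathbf{v})$ is designed for), but it is only sketched, and in any case it cannot repair the wrong-direction inequality in Case B.
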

\begin{proof}
 Using \eqref{AGK_height}, \eqref{AGKe3.8}, \eqref{AGKe3.9} and \eqref{AGKe3.12}, and with the replacements
 \begin{equation}\label{replacements}
  \left. \begin{matrix}
          (x,y,z)\in\mathbb{R}^3 \to (\xi,\eta,\zeta)\in\mathbb{C}^3,\quad (\lambda,\mu)\to(\lambda,1-\lambda), \\
          (p,r,q)\in\mathbb{Z}^2\times\mathbb{N} \to (p,r,q)\in\tilde{\mathcal{O}}_{\mathbb{K}}^3, \\
        \text{absolute~norm}~|\cdot|~\text{on}~\mathbb{R}^3\to~\text{absolute~norm}~|\cdot|~\text{on}~\mathbb{C}^3 \\
          \text{(in~particular,}~q\to|q|\text{)},
         \end{matrix} \right\}
 \end{equation}
the proof follows analogously to that of \cite[Lemma 3.6]{AGK}.
\end{proof}

The following estimates hold analogous to \cite[Lemma 5.1]{AGK}:

\begin{lemma}\label{AGKl5.1}
 Let $n\geq 0$, $B\in\mathscr{B}'_n$, $k\geq 1$, and $B_j\in\mathscr{B}_{n+k}$, $j=1,2$ are such that $B_j\subset B$.  Let $\mathbf{v}_j=(p_j,r_j,q_j)\in\mathscr{V}_{B,k}$ be such that $B\cap \Delta_{\varepsilon, \lambda}(\mathbf{v}_j)\neq\emptyset$, and $\mathbf{w}_j=\mathbf{w}(B_j,\mathbf{v}_j)$.  Then
 \begin{equation}\label{AGKe5.1-5.2}
  |\langle\mathbf{v}_j,\mathbf{w}_{j'}\rangle|\leq 18\varepsilon\kappa\sqrt{d} R^{d_k}+64\varepsilon\kappa^3R^{k+1}\dfrac{|q_j|}{|q_{j'}|} \quad (j,j'\in\{1,2\}, j\neq j'),
 \end{equation}
 where \[ d_k=\begin{cases}
               8 & \text{if}~k=1,\\
               2 & \text{if}~k>1.
              \end{cases}
 \]
\end{lemma}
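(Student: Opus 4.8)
The plan is to transcribe the proof of \cite[Lemma 5.1]{AGK} to the Hermitian setting via the substitution dictionary \eqref{replacements}, the genuinely new points being the handling of complex conjugation in $\langle\cdot,\cdot\rangle$ and the bookkeeping of the $\sqrt[4]{d}$-factors. Fix $j\ne j'$ in $\{1,2\}$ and write $\alpha_i=p_i/q_i$, $\gamma_i=r_i/q_i$, $\mathbf{w}_{j'}=(a_{j'},b_{j'},c_{j'})$. Since $\langle\cdot,\cdot\rangle$ is conjugate-linear in its first argument, $\langle c\mathbf{v}_{j'},\mathbf{w}_{j'}\rangle=\bar c\,\langle\mathbf{v}_{j'},\mathbf{w}_{j'}\rangle=0$ for all $c\in\mathbb{C}$, so taking $c=q_j/q_{j'}$ to kill the third coordinate gives $\langle\mathbf{v}_j,\mathbf{w}_{j'}\rangle=\big\langle\mathbf{v}_j-\tfrac{q_j}{q_{j'}}\mathbf{v}_{j'},\mathbf{w}_{j'}\big\rangle=\overline{q_j(\alpha_j-\alpha_{j'})}\,a_{j'}+\overline{q_j(\gamma_j-\gamma_{j'})}\,b_{j'}$. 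Splitting $b_{j'}=(b_{j'}+\zeta_B a_{j'})-\zeta_B a_{j'}$ to produce a cancellation, this becomes $\overline{q_j\big[(\alpha_j-\alpha_{j'})-\overline{\zeta_B}(\gamma_j-\gamma_{j'})\big]}\,a_{j'}+\overline{q_j(\gamma_j-\gamma_{j'})}\,(b_{j'}+\zeta_B a_{j'})$, so $|\langle\mathbf{v}_j,\mathbf{w}_{j'}\rangle|\le|a_{j'}|\cdot\big|q_j\big[(\alpha_j-\alpha_{j'})-\overline{\zeta_B}(\gamma_j-\gamma_{j'})\big]\big|+|q_j(\gamma_j-\gamma_{j'})|\cdot|b_{j'}+\zeta_B a_{j'}|$; the conjugate on $\zeta_B$ is forced by the conjugation of the entries of $\mathbf{w}_{j'}$.

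The next step is to estimate the two ``rational-point displacements'' $q_j(\alpha_j-\alpha_{j'})=\big(p_jq_{j'}-q_jp_{j'}\big)/q_{j'}$ and $q_j(\gamma_j-\gamma_{j'})=\big(r_jq_{j'}-q_jr_{j'}\big)/q_{j'}$. Since $B\cap\Delta_{\varepsilon,\mathbb{K},\lambda}(\mathbf{v}_i)\ne\emptyset$, I would pick $\mathbf{z}_i=(\xi_i,\eta_i,\zeta_i)$ in this intersection for $i=j,j'$; the defining inequalities of $\Delta_{\varepsilon,\mathbb{K},\lambda}$ show that $\alpha_i$ is within $\ll\kappa\varepsilon\sqrt[4]{d}|q_i|^{\lambda-2}$ of $\xi_i$ and $\gamma_i$ within $\varepsilon\sqrt[4]{d}|q_i|^{\lambda-2}$ of $\eta_i$, so using $\operatorname{diam}(B)\le 2\rho(B)$, the bounds $|\xi_i|,|\eta_i|,|\zeta_i|\le\kappa$, the inequality $|q_j|\rho(B)\le 2H_nR\,\rho(B)\le 2R\varepsilon\kappa$ (from $H_n=\varepsilon\kappa R^n/\rho_0$ and $\rho(B)\le\rho_0/R^n$), the bound $|q_i|\ge\tfrac{\sqrt{3d}}{\sqrt[4]{2}}H_n^{1/(1+\lambda)}$ from \eqref{AGKe3.11}, and the within-block ratio bound $|q_j|/|q_{j'}|\le R^{d_k}$ (which is $R^8$ for $k=1$ and $R^2$ for $k>1$ by the definition of $\mathscr{V}_{B,k}$), both displacements come out $\ll\varepsilon\kappa\sqrt[4]{d}\,R^{d_k}$; likewise $\big|q_j\big[(\alpha_j-\alpha_{j'})-\overline{\zeta_B}(\gamma_j-\gamma_{j'})\big]\big|\ll\varepsilon\kappa^2\sqrt[4]{d}\,R^{d_k}$, the extra $\kappa$ absorbing $|\overline{\zeta_B}|\le\kappa$ times an already-small displacement.

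Finally I would control the size of $\mathbf{w}_{j'}$. Because $\mathbf{w}_{j'}$ is the shortest distal vector of $\mathscr{W}(B_{j'},\mathbf{v}_{j'})$, Lemma \ref{AGKl3.4} gives $\max\{|a_{j'}|,|b_{j'}+\zeta_{B_{j'}}a_{j'}|\}=H_{B_{j'}}(\mathbf{v}_{j'})/|q_{j'}|$, and $|\zeta_{B_{j'}}-\zeta_B|\le 2\rho(B)$ then lets me pass from $\zeta_{B_{j'}}$ to $\zeta_B$; the height $H_{B_{j'}}(\mathbf{v}_{j'})$ is tied to $H_B(\mathbf{v}_{j'})\le 2H_{n+1}$ via the comparison of heights for the nested pair $B_{j'}\subset B$ exactly as in \cite{AGK}. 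Substituting these into the displayed bound and grouping, the terms with no factor $|q_j|/|q_{j'}|$ assemble into $18\varepsilon\kappa\sqrt{d}\,R^{d_k}$ and those proportional to $|q_j|/|q_{j'}|$ into $64\varepsilon\kappa^3R^{k+1}\,|q_j|/|q_{j'}|$. The main obstacle is the bookkeeping: collapsing the various $\sqrt[4]{d}$'s into the single $\sqrt{d}$, pinning down the precise exponents $R^{d_k}$, $R^{k+1}$ and the numerical constants, and — since the excerpt does not reproduce it — re-deriving in the complex setting the nested-ball height comparison that keeps $|\mathbf{w}_{j'}|$ under control, which is the linchpin of the whole estimate.
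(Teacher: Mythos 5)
The paper offers no proof of this lemma at all---it is stated as holding ``analogous to \cite[Lemma 5.1]{AGK}''---so your attempt can only be measured against that template. Your opening is correct and matches it: the reduction $\langle\mathbf{v}_j,\mathbf{w}_{j'}\rangle=\langle\mathbf{v}_j-\tfrac{q_j}{q_{j'}}\mathbf{v}_{j'},\mathbf{w}_{j'}\rangle$ via $\langle\mathbf{v}_{j'},\mathbf{w}_{j'}\rangle=0$, the placement of the conjugations (including the forced $\overline{\zeta_B}$), and the estimation of the displacements $q_j(\alpha_j-\alpha_{j'})$, $q_j(\gamma_j-\gamma_{j'})$ through points of $B\cap\Delta_{\varepsilon,\mathbb{K},\lambda}(\mathbf{v}_i)$ together with the within-block ratio $|q_j|/|q_{j'}|\le R^{d_k}$.

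The gap sits exactly at the step you yourself call the linchpin, the control of $\mathbf{w}_{j'}$, and it is not mere bookkeeping. You take the hypothesis $\mathbf{v}_j\in\mathscr{V}_{B,k}$ at face value and accordingly try to reach $H_{B_{j'}}(\mathbf{v}_{j'})$ from $H_B(\mathbf{v}_{j'})\le 2H_{n+1}$ by an unspecified ``nested-ball height comparison''. Three objections. (i) As printed the hypotheses are contradictory for $n\ge 1$: by \eqref{AGKe3.12}, $B\in\mathscr{B}'_n$ forces $B\cap\Delta_{\varepsilon,\mathbb{K},\lambda}(\mathbf{v})=\emptyset$ for every $\mathbf{v}\in\mathscr{V}_B\supset\mathscr{V}_{B,k}$. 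The intended reading, consistent with Lemma \ref{AGKl5.2}, its remark, and Proposition \ref{AGKl3.7}, is $\mathbf{v}_j\in\mathscr{V}_{B_j,k}$. (ii) Under that reading no transfer is needed: $\mathbf{v}_{j'}\in\mathscr{V}_{B_{j'}}$ with $B_{j'}\in\mathscr{B}_{n+k}$ gives $\max\{|a_{j'}|,|b_{j'}+\zeta_{B_{j'}}a_{j'}|\}\le 2H_{n+k+1}/|q_{j'}|$ directly from \eqref{V_B}, and this is the sole source of the factor $R^{k+1}$ in the second term, via $\rho(B)\,H_{n+k+1}\le(\rho_0R^{-n})(\varepsilon\kappa R^{n+k+1}\rho_0^{-1})=\varepsilon\kappa R^{k+1}$; feeding in $H_{n+1}$ as you propose cannot produce that exponent. (iii) The transfer you invoke is in any case not available cheaply: since $\sqrt{\rho(B_{j'})}<\sqrt{\rho(B)}$ and $\zeta_{B_{j'}}\ne\zeta_B$, the minimizer realizing $H_B(\mathbf{v}_{j'})$ need not belong to $\mathscr{W}(B_{j'},\mathbf{v}_{j'})$, so $H_{B_{j'}}(\mathbf{v}_{j'})$ is not controlled by $H_B(\mathbf{v}_{j'})$ in any obvious way. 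Finally, closing the estimate requires alternating, term by term, between the height bound above and the raw membership bounds $|a_{j'}|\le\sqrt[4]{4d}|q_{j'}|^{\lambda}$ and $|b_{j'}+\zeta_{B_{j'}}a_{j'}|\le\sqrt[4]{4d}|q_{j'}|^{1-\lambda}+\sqrt{\rho(B_{j'})}$ (whence $\sqrt[4]{d}\cdot\sqrt[4]{4d}=\sqrt{2}\sqrt{d}$ accounts for the $\sqrt{d}$ in the first term); your sketch leaves this case analysis, and hence the shape of both terms of \eqref{AGKe5.1-5.2}, unestablished.
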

For a closed ball $B\subset\mathbb{C}^3$ and $\mathbf{v}=(p,r,q)\in \tilde{\mathcal{O}}^3_{\mathbb{K}}$, we also consider the complex hyperplane
\[ \mathscr{H}(B,\mathbf{v})=\{(\xi,\eta,\zeta)\in\mathbb{C}^3:\langle (\xi,\eta,1),{\mathbf{w}}\rangle=0\}, \]
where $\mathbf{w}=\mathbf{w}(B,\mathbf{v})$ is given by \eqref{w}.  Then, analogous to \cite[Lemma 5.2]{AGK}, Lemma \ref{AGKl5.1} implies the following:
\begin{lemma}\label{AGKl5.2}
 Let $n\geq 0$, $B\in\mathscr{B}'_n$, $k\geq 1$.  Then one of the following statements is true:
 \begin{enumerate}
  \item There exists a complex hyperplane $\mathscr{H}_k(B)$ such that for any $B'\in\mathscr{B}_{n+k}$ with $B'\subset B$, \[ \mathbf{v}\in\mathscr{V}_{B',k},\, B\cap\Delta_{\varepsilon,\mathbb{K},\lambda}(\mathbf{v})\neq\emptyset\,\implies\,\mathscr{H}(B',\mathbf{v})=\mathscr{H}_k(B), \]
  \item $k=1$, and there exists $\mathbf{v}_0=(p_0,r_0,q_0)\in \tilde{\mathcal{O}}^3_{\mathbb{K}}$ with $\frac{\sqrt{3d}}{\sqrt[4]{2}}H_{n+1}^{\frac{1}{1+\lambda}}\leq|q_0|\leq 2H_{n+2}$ such that for any $B'\in\mathscr{B}_{n+1}$ with $B'\subset B$,
  \[ \mathbf{v}\in\mathscr{V}_{B',1},\,B\cap\Delta_{\varepsilon,\mathbb{K},\lambda}(\mathbf{v})\neq\emptyset\,\implies\mathbf{v}=t\mathbf{v}_0~\text{for some}~t\in\mathbb{C}~\text{with}~|t|\geq1. \]
 \end{enumerate}
\end{lemma}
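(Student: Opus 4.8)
The plan is to transcribe the proof of \cite[Lemma 5.2]{AGK} to the present setting via the dictionary \eqref{replacements}, using Lemma \ref{AGKl5.1} and the Hermitian vector-product identities \eqref{hermprod}, \eqref{crossprod}, \eqref{vec3prod}. Suppose statement (1) fails. Then there are closed balls $B_1,B_2\in\mathscr{B}_{n+k}$ with $B_j\subset B$, and vectors $\mathbf{v}_j=(p_j,r_j,q_j)\in\mathscr{V}_{B_j,k}$ with $B\cap\Delta_{\varepsilon,\mathbb{K},\lambda}(\mathbf{v}_j)\neq\emptyset$, such that the distal vectors $\mathbf{w}_j:=\mathbf{w}(B_j,\mathbf{v}_j)$ of \eqref{w} are $\mathbb{C}$-linearly independent, i.e. $\mathscr{H}(B_1,\mathbf{v}_1)\neq\mathscr{H}(B_2,\mathbf{v}_2)$. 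The goal is to deduce statement (2).

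First I would gather the inputs. By the definition of $\mathscr{W}(B_j,\mathbf{v}_j)$ one has $\langle\mathbf{v}_j,\mathbf{w}_j\rangle=0$ exactly, while Lemma \ref{AGKl5.1} bounds the cross pairings $|\langle\mathbf{v}_j,\mathbf{w}_{j'}\rangle|$, $j\neq j'$, by $18\varepsilon\kappa\sqrt{d}\,R^{d_k}+64\varepsilon\kappa^{3}R^{k+1}|q_j|/|q_{j'}|$. From $B\cap\Delta_{\varepsilon,\mathbb{K},\lambda}(\mathbf{v}_j)\neq\emptyset$ together with \eqref{AGKe3.7}--\eqref{AGKe3.9} one extracts the crude size bounds $\|\mathbf{v}_j\|\leq 2\kappa|q_j|$ (the coordinates $p_j,r_j$ being pinned, up to $O(\varepsilon)$, to $q_j$ times coordinates of a point of $B$) and, from \eqref{AGKe3.5} with $\lambda\geq\tfrac12$ and $\rho(B_j)\leq\rho_0\leq1$, a bound $\|\mathbf{w}_j\|\leq C\kappa^{2}\sqrt[4]{d}\,|q_j|^{\lambda}$ with $C$ absolute; finally, since both $\mathbf{v}_j$ lie in $\mathscr{V}_{\cdot,k}$, the ratio $|q_1|/|q_2|$ is confined to $[R^{-8},R^{8}]$ when $k=1$ and to $[R^{-2},R^{2}]$ when $k\geq2$, in step with the drop of $d_k$ from $8$ to $2$.

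Now the geometric core. Since $\mathbf{w}_1,\mathbf{w}_2$ are $\mathbb{C}$-independent, $\mathbf{n}:=\mathbf{w}_1\times\mathbf{w}_2$ is a nonzero element of $\mathcal{O}_{\mathbb{K}}^{3}$ — hence $\|\mathbf{n}\|\geq1$, as every nonzero $p\in\mathcal{O}_{\mathbb{K}}$ has $|p|^{2}=N_{\mathbb{K}}(p)\geq1$ — and by the scalar triple product identity and self-annihilation $\langle\mathbf{n},\mathbf{w}_1\rangle=\langle\mathbf{n},\mathbf{w}_2\rangle=0$, so $\mathbb{C}\mathbf{n}=\mathbf{w}_1^{\perp}\cap\mathbf{w}_2^{\perp}$. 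On the other hand, each $\langle\mathbf{v}_j,\mathbf{w}_{j'}\rangle$ with $j\neq j'$ is an element of $\mathcal{O}_{\mathbb{K}}$ — a sum of products of entries of $\mathbf{v}_j,\mathbf{w}_{j'}\in\mathcal{O}_{\mathbb{K}}^{3}$ and their conjugates — whose absolute value, by Lemma \ref{AGKl5.1} and the choices \eqref{AGKe3.8}--\eqref{AGKe3.9}, is less than $1$; hence $\langle\mathbf{v}_j,\mathbf{w}_{j'}\rangle=0$. Combined with $\langle\mathbf{v}_j,\mathbf{w}_j\rangle=0$, this puts both $\mathbf{v}_1$ and $\mathbf{v}_2$ in $\operatorname{span}_{\mathbb{C}}\{\mathbf{w}_1,\mathbf{w}_2\}^{\perp}=\mathbb{C}\mathbf{n}$, so $\mathbf{v}_1$ and $\mathbf{v}_2$ are $\mathbb{C}$-proportional. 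A short propagation then shows that every $\mathbf{v}\in\mathscr{V}_{B',k}$ with $B'\in\mathscr{B}_{n+k}$, $B'\subset B$, $B\cap\Delta_{\varepsilon,\mathbb{K},\lambda}(\mathbf{v})\neq\emptyset$ lies on one complex line $\mathbb{C}\mathbf{v}_0$: given such a $\mathbf{v}$, compare its distal vector with whichever of $\mathbf{w}_1,\mathbf{w}_2$ is not proportional to it, and apply the previous step. Taking $\mathbf{v}_0=(p_0,r_0,q_0)$ to be a witnessing vector of least $|q_0|$, we get $\mathbf{v}_0\in\tilde{\mathcal{O}}_{\mathbb{K}}^{3}$, each witnessing $\mathbf{v}$ equals $t\mathbf{v}_0$ with $|t|=|q|/|q_0|\geq1$, and $\tfrac{\sqrt{3d}}{\sqrt[4]{2}}H_{n+1}^{\frac{1}{1+\lambda}}\leq|q_0|\leq2H_{n+2}$ by \eqref{AGKe3.11} (applied to $\mathscr{V}_{B',1}\subseteq\mathscr{V}_{B'}$ for the witnessing $B'\in\mathscr{B}_{n+1}$). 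Finally, $k=1$ is forced: when $k\geq2$ the $|q|$-windows \eqref{V_B,k} begin at $R^{2k+4}\geq R^{8}$, well beyond the window $[1,R^{8}]$ of $\mathscr{V}_{\cdot,1}$; with this coarser scale and the sharper inputs for $k\geq2$ ($d_k=2$, $|q_1|/|q_2|\leq R^{2}$), the argument of \cite[Lemma 5.2]{AGK} shows that the distal vectors $\mathbf{w}(B',\mathbf{v})$ must all coincide — that is, statement (1) holds — contradicting our assumption. Therefore $k=1$, and statement (2) holds.

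I expect the main obstacle to be the quantitative bookkeeping: verifying that the Lemma \ref{AGKl5.1} bound on $|\langle\mathbf{v}_j,\mathbf{w}_{j'}\rangle|$ (and, if the admissible range of $k$ is not bounded a priori, the auxiliary bound on $\|\mathbf{v}_1\times\mathbf{v}_2\|$ assembled from it, the size bounds for $\mathbf{v}_j,\mathbf{w}_j$, and $\|\mathbf{n}\|\geq1$) is genuinely $<1$ under the precise choices \eqref{AGKe3.8}--\eqref{AGKe3.9}, uniformly in $k$ — which may force a split into small and large $k$, the latter perhaps leaning on Proposition \ref{AGKl3.6} to discard far-away $\Delta_{\varepsilon,\mathbb{K},\lambda}(\mathbf{v})$ — and, for the $k=1$ versus $k\geq2$ dichotomy, making the coincidence of the distal vectors for $k\geq2$ quantitative (this is where the separation $R^{2k+4}\geq R^{8}$ for $k\geq2$ against $[1,R^{8}]$ for $k=1$ in \eqref{V_B,k} does its work). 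A secondary point needing care is the inclusion $\mathscr{V}_{B',k}\subseteq\mathscr{V}_{B,k}$ for $B'\subset B$, $B'\in\mathscr{B}_{n+k}$, $B\in\mathscr{B}_n$, which is what lets Lemma \ref{AGKl5.1} — phrased for vectors of $\mathscr{V}_{B,k}$ — be applied to the $\mathbf{v}_j\in\mathscr{V}_{B_j,k}$ above; this follows from the monotonicity of the height $H_{\bullet}(\mathbf{v})$ under ball inclusion built into \eqref{AGKe3.5}--\eqref{AGK_height}. With these in hand, the rest is a formal transcription of \cite[Lemma 5.2]{AGK} via \eqref{replacements}.
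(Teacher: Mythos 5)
Your high-level plan (transcribe \cite[Lemma 5.2]{AGK} via \eqref{replacements}, with Lemma \ref{AGKl5.1} and the Hermitian product identities as inputs) is the intended one, but the mechanism you propose for the core step is not the paper's and, as stated, does not work. You conclude $\langle\mathbf{v}_j,\mathbf{w}_{j'}\rangle=0$ by arguing that this element of $\mathcal{O}_{\mathbb{K}}$ has modulus less than $1$. The bound from Lemma \ref{AGKl5.1} is $18\varepsilon\kappa\sqrt{d}R^{d_k}+64\varepsilon\kappa^3R^{k+1}|q_j|/|q_{j'}|$, and with $\varepsilon\le\rho_0/(200d\kappa^3R^{10})$ from \eqref{AGKe3.9} the second term is of size $\tfrac{64}{200d}R^{k-9}|q_j|/|q_{j'}|$. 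This exceeds $1$ once $k$ is moderately large: already for $k\ge 8$ when $|q_j|/|q_{j'}|$ is near its upper limit $R^2$, and for $k\ge 10$ even when the ratio equals $1$ (recall $R\ge 10^8 d\kappa^6$). Since $k$ is unbounded (Lemma \ref{AGKl3.5} allows $k$ up to $n$), the exact vanishing of the individual pairings is simply not available, and this is the crux rather than "bookkeeping." The argument indicated in the Remark following the lemma (and carried out in \cite{AGK}) is arranged precisely to avoid this: one applies the integrality to the \emph{vector} $\mathbf{w}_1\times\mathbf{w}_2\in\mathcal{O}_{\mathbb{K}}^3$ rather than to the scalars, using $\mathbf{v}_1\times(\mathbf{w}_1\times\mathbf{w}_2)=(\mathbf{v}_1\cdot\mathbf{w}_2)\mathbf{w}_1$ together with its companion for $\mathbf{v}_2$, so that the two Lemma \ref{AGKl5.1} bounds enter \emph{multiplicatively} and the ratios $|q_1|/|q_2|$ and $|q_2|/|q_1|$ cancel; the conclusion is $\mathbf{w}_1\times\mathbf{w}_2=\mathbf{0}$, i.e.\ linear dependence of the $\mathbf{w}$'s (which is what case (1) requires), not orthogonality of the $\mathbf{v}$'s to both $\mathbf{w}$'s.

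Beyond this, the two places where the dichotomy is actually decided are asserted rather than proved: the exclusion of $k\ge 2$ from case (2) is delegated to "the argument of \cite[Lemma 5.2]{AGK}" without identifying which estimate forces coincidence of the distal vectors there, and the propagation from two witnesses to all admissible $\mathbf{v}$ is only sketched. A further point you wave at but do not resolve is the mismatch between the hypothesis of Lemma \ref{AGKl5.1} ($\mathbf{v}_j\in\mathscr{V}_{B,k}$ with $B\in\mathscr{B}'_n$, so the $|q|$-window in \eqref{V_B,k} is indexed by $H_n$) and the vectors appearing in Lemma \ref{AGKl5.2} ($\mathbf{v}\in\mathscr{V}_{B',k}$ with $B'\in\mathscr{B}_{n+k}$, window indexed by $H_{n+k}$); "monotonicity of the height under ball inclusion" does not by itself reconcile these two windows. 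Your endgame for case (2) — choosing $\mathbf{v}_0$ of minimal $|q_0|$, reading off $|t|=|q|/|q_0|\ge 1$, and obtaining the bounds on $|q_0|$ from \eqref{AGKe3.11} — is fine once the proportionality of the $\mathbf{v}$'s is actually established.
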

\begin{remark}
The key point in proving Lemma \ref{AGKl5.2} is to show that $\mathbf{w}_1,\mathbf{w}_2$ are linearly dependent, where 
\[ B_j\in\mathscr{B}_{n+k}, B_j\subset B, \mathbf{v}_j\in\mathscr{V}_{B_j,k},B\cap\Delta_{\varepsilon,\mathbb{K},\lambda}(\mathbf{v_j})\neq\emptyset, \mathbf{w}_j=\mathbf{w}(B_j,\mathbf{v}_j) \quad(j=1,2). \]
This is achieved by using the complex vector triple product identity \eqref{vec3prod} and the fact that $\mathbf{v}_1\cdot\mathbf{w}_1=0$ (from the definition of $\mathscr{W}(B_1,\mathbf{v}_1$)) to get
\[ \mathbf{v}_1\times(\mathbf{w}_1\times\mathbf{w}_2) = (\mathbf{v}_1\cdot\mathbf{w}_2)\mathbf{w}_1. \]
The estimates from Lemma \ref{AGKl5.1} show that $\mathbf{w}_1\times\mathbf{w}_2=\mathbf{0}$, yielding the linear dependence of $\mathbf{w}_1$ and $\mathbf{w}_2$.
\end{remark}

For a complex hyperplane $\mathscr{H}\in\mathbb{C}^n$ and $\delta>0$, denote the $\delta$-neighborhood of $\mathscr{H}$ by
\begin{equation}\label{hypernbd}
\mathscr{H}^{(\delta)}:=\{\mathbf{z}\in\mathbb{C}^n:\inf_{\mathbf{w}\in \mathscr{H}_j}\|\mathbf{z}-\mathbf{w}\|<\delta_j\}.
\end{equation}
At this stage, we will prove that, given a pair $(B,\mathbf{v})$, there exist complex hyperplanes $\{\mathscr{H}_k(B)\}_{k\in\mathbb{N}}$ whose appropriate neighborhoods fully contain the intersections of $\Delta_{\varepsilon,\mathbb{K},\lambda}(\mathbf{v})$ with the admissible balls contained in $B$: 

\begin{proposition}\label{AGKl3.7}
 Let $n\geq 0$, $B\in\mathscr{B}'_n$, and $k\geq 1$.  Then there exists a complex hyperplane $\mathscr{H}_k(B)\subset\mathbb{C}^3$ such that
 \[ \Delta_{\varepsilon,\mathbb{K},\lambda}(\mathbf{v})\cap B'\subset\mathscr{H}_k(B)^{(\rho_0R^{-n-k}d^{-1/2})} \]
 for any $B'\in\mathscr{B}_{n+k}$ with $B'\subset B$ and any $\mathbf{v}\in\mathscr{V}_{B',k}$.
\end{proposition}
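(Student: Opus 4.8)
The plan is to run a case analysis driven by Lemma \ref{AGKl5.2}. Fix $n\geq 0$, $B\in\mathscr{B}'_n$, and $k\geq 1$. If alternative (1) of Lemma \ref{AGKl5.2} holds, then there is already a complex hyperplane $\mathscr{H}_k(B)$ with the property that $\mathscr{H}(B',\mathbf{v})=\mathscr{H}_k(B)$ whenever $B'\in\mathscr{B}_{n+k}$, $B'\subset B$, $\mathbf{v}\in\mathscr{V}_{B',k}$, and $B\cap\Delta_{\varepsilon,\mathbb{K},\lambda}(\mathbf{v})\neq\emptyset$. So it suffices to show, for such a pair $(B',\mathbf{v})$, that
\[
\Delta_{\varepsilon,\mathbb{K},\lambda}(\mathbf{v})\cap B'\subset\mathscr{H}(B',\mathbf{v})^{(\rho_0R^{-n-k}d^{-1/2})}.
\]
Take any $(\xi,\eta,\zeta)\in\Delta_{\varepsilon,\mathbb{K},\lambda}(\mathbf{v})\cap B'$ and write $\mathbf{w}=\mathbf{w}(B',\mathbf{v})=(a,b,c)$. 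The quantity controlling the distance of $(\xi,\eta,\zeta)$ to $\mathscr{H}(B',\mathbf{v})$ is $|\langle(\xi,\eta,1),\mathbf{w}\rangle|/\|\mathbf{w}\|$, and the numerator $|\overline{a}\xi+\overline{b}\eta+\overline{c}|$ must be bounded using the defining inequalities of $\Delta_{\varepsilon,\mathbb{K},\lambda}(\mathbf{v})$ together with $\langle\mathbf{v},\mathbf{w}\rangle=\overline{a}p+\overline{b}r+\overline{c}q=0$. Concretely, I would substitute $\overline{c}=-(\overline{a}p+\overline{b}r)/q$ and rewrite
\[
\overline{a}\xi+\overline{b}\eta+\overline{c}
=\overline{a}\Bigl(\xi-\tfrac{p}{q}-\zeta\bigl(\eta-\tfrac{r}{q}\bigr)\Bigr)+(\overline{b}+\overline{a}\,\zeta)\Bigl(\eta-\tfrac{r}{q}\Bigr),
\]
then estimate each term via the two inequalities $\bigl|\xi-\tfrac{p}{q}-\zeta(\eta-\tfrac rq)\bigr|<\varepsilon\sqrt[4]{d}\,|q|^{-1-\lambda}$ and $\bigl|\eta-\tfrac rq\bigr|<\varepsilon\sqrt[4]{d}\,|q|^{-2+\lambda}$, using the bounds $|a|<\sqrt[4]{4d}\,|q|^\lambda$ and $|b+\zeta_{B'}a|<\sqrt[4]{4d}\,|q|^{1-\lambda}+\sqrt{\rho(B')}$ from the definition of $\mathscr{W}(B',\mathbf{v})$ (replacing $\zeta_{B'}$ by $\zeta$ costs an error of $|a|\cdot\rho(B')$, which is harmless since $(\xi,\eta,\zeta)\in B'$). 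This yields $|\overline{a}\xi+\overline{b}\eta+\overline{c}|\lesssim\varepsilon\sqrt d$ up to absolute constants and factors of $\kappa$, and dividing by $\|\mathbf{w}\|\geq\max\{|a|,|b+\zeta_{B'}a|\}-\|\mathbf{w}\|\,\rho(B')\gtrsim H_{B'}(\mathbf{v})/|q|\geq 3dH_{n+k}/|q|$ (using $B'\in\mathscr{B}_{n+k}$ and the definition of $\mathscr{V}_{B',k}\subset\mathscr{V}_{B'}$) converts this into a bound of the desired shape $\rho_0R^{-n-k}d^{-1/2}$ after invoking the smallness of $\varepsilon$ from \eqref{AGKe3.9} and $H_m=\varepsilon\kappa R^m/\rho_0$.

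If instead alternative (2) of Lemma \ref{AGKl5.2} holds — so $k=1$ and there is a single $\mathbf{v}_0=(p_0,r_0,q_0)$ with every relevant $\mathbf{v}$ of the form $t\mathbf{v}_0$, $|t|\geq1$ — I would take $\mathscr{H}_1(B)$ to be the complex hyperplane $\{(\xi,\eta,\zeta):\xi=\tfrac{p_0}{q_0}+\zeta(\eta-\tfrac{r_0}{q_0})+\tfrac{q_0}{|q_0|}\,0\}$, or more precisely the affine hyperplane defined via $\mathbf{v}_0$ on which the first defining quantity of $\Delta$ vanishes; the point is that for $\mathbf{v}=t\mathbf{v}_0$ the two $\Delta$-inequalities become inequalities in $p_0/q_0,r_0/q_0$ with $|q|=|t|\,|q_0|\geq|q_0|$, so membership in $\Delta_{\varepsilon,\mathbb{K},\lambda}(\mathbf{v})$ forces $\bigl|\eta-\tfrac{r_0}{q_0}\bigr|<\varepsilon\sqrt[4]{d}\,|q_0|^{-2+\lambda}$ and hence, on $B'\subset B$, forces $(\xi,\eta,\zeta)$ into a thin slab around the complex hyperplane $\{\eta=r_0/q_0\}$ of half-width governed by $\varepsilon\sqrt[4]{d}\,|q_0|^{-2+\lambda}$. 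Using $|q_0|\geq\tfrac{\sqrt{3d}}{\sqrt[4]{2}}H_{n+1}^{1/(1+\lambda)}$ and $H_{n+1}=\varepsilon\kappa R^{n+1}/\rho_0$, one checks this half-width is $\leq\rho_0R^{-n-1}d^{-1/2}$ by absorbing the powers of $R$ against \eqref{AGKe3.8}–\eqref{AGKe3.9}; this is exactly the $k=1$ case of the asserted inclusion.

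The main obstacle I anticipate is bookkeeping rather than conceptual: I must track the constants $d$, $\kappa$, and powers of $R$ through the substitution $\overline{c}=-(\overline{a}p+\overline{b}r)/q$ and the replacement $\zeta_{B'}\to\zeta$ carefully enough that the final estimate lands below $\rho_0R^{-n-k}d^{-1/2}$ uniformly in $n$, $k$, $B'$, and $\mathbf{v}$ — the exponent $d^{-1/2}$ and the clean power $R^{-n-k}$ leave little slack, so the inequalities \eqref{AGKe3.8} and \eqref{AGKe3.9} have to be used essentially at full strength, and in the $k=1$/alternative-(2) branch one must be slightly more careful because $|q_0|$ can be as small as $\sim H_{n+1}^{1/(1+\lambda)}$ rather than $\sim H_{n+1}$. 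A secondary subtlety is that in alternative (1) the hyperplane $\mathscr{H}_k(B)$ is the \emph{same} $\mathscr{H}(B',\mathbf{v})$ for all admissible $(B',\mathbf{v})$, so once the per-pair estimate above is in hand the uniform statement follows immediately; in alternative (2) the hyperplane is built from $\mathbf{v}_0$ and is manifestly independent of $B'$ and of $\mathbf{v}$, so again uniformity is automatic. Beyond these points the argument is the verbatim complex transcription of \cite[Lemma 3.7]{AGK} under the dictionary \eqref{replacements}, with Corollary \ref{AGKl3.3}, Lemma \ref{AGKl3.4}, Lemma \ref{AGKl5.1} and Lemma \ref{AGKl5.2} playing the roles of their real counterparts.
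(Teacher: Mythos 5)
Your Case (1) argument is essentially the paper's: rewrite $\overline{a}\xi+\overline{b}\eta+\overline{c}$ via the orthogonality $\langle\mathbf{v},\mathbf{w}\rangle=0$, estimate with the two $\Delta$-inequalities and the bounds defining $\mathscr{W}(B',\mathbf{v})$, and divide by the norm of the normal vector, lower-bounded through $H_{B'}(\mathbf{v})\geq 3dH_{n+k}$. (One small slip there: the distance to the affine hyperplane $\{\overline{a}\xi+\overline{b}\eta+\overline{c}=0\}$ is normalized by $\sqrt{|a|^2+|b|^2}$, not by $\|\mathbf{w}\|$, since $c$ multiplies the constant coordinate; the paper uses $\sqrt{|a|^2+|b|^2}\geq\max\{|a|,|b|\}\geq H_{B'}(\mathbf{v})/(\kappa|q|)$.)

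Case (2), however, has a genuine gap. Your first candidate $\{\xi=\tfrac{p_0}{q_0}+\zeta(\eta-\tfrac{r_0}{q_0})\}$ is not a complex hyperplane because of the bilinear term $\zeta\eta$, and the fallback you actually estimate with, the slab around $\{\eta=r_0/q_0\}$ of half-width $\varepsilon\sqrt[4]{d}\,|q_0|^{-(2-\lambda)}$, does not meet the required bound. Indeed $|q_0|$ is only bounded below by $\tfrac{\sqrt{3d}}{\sqrt[4]{2}}H_{n+1}^{1/(1+\lambda)}$, so the half-width can be as large as a constant times $\varepsilon H_{n+1}^{-(2-\lambda)/(1+\lambda)}$; since $(2-\lambda)/(1+\lambda)<1$ for $\lambda>\tfrac12$ and $H_{n+1}=\varepsilon\kappa R^{n+1}/\rho_0\to\infty$ with $n$, this decays like $R^{-(n+1)(2-\lambda)/(1+\lambda)}$, strictly slower than the required $\rho_0R^{-n-1}d^{-1/2}$; no choice of the fixed $\varepsilon$ in \eqref{AGKe3.9} can repair this uniformly in $n$. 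You correctly flagged the danger that $|q_0|\sim H_{n+1}^{1/(1+\lambda)}$, but the resolution is to use the \emph{first} defining inequality of $\Delta$, whose exponent $1+\lambda$ exactly cancels that $1/(1+\lambda)$: the paper takes $\mathscr{H}_1(B)=\{\xi-\tfrac{p_0}{q_0}-\zeta_B(\eta-\tfrac{r_0}{q_0})=0\}$, freezing $\zeta$ at the center $\zeta_B$ to make the set affine, and controls the linearization error by $|\zeta-\zeta_B|\,|\eta-\tfrac{r}{q}|\leq\rho(B)\varepsilon\sqrt[4]{d}|q_0|^{-(2-\lambda)}$, which is a lower-order correction since $\rho(B)|q_0|\leq 2\varepsilon\kappa R^2$. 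With that substitution the dominant term is $\varepsilon\sqrt[4]{d}|q_0|^{-(1+\lambda)}\lesssim\varepsilon/H_{n+1}$, which lands below $\rho_0R^{-n-1}d^{-1/2}$ as needed.
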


\begin{proof}
 The case of $B$ avoiding $\Delta_{\varepsilon,\mathbb{K},\lambda}(\mathbf{v})$ is trivial.  Lemma \ref{AGKl5.2} is applicable otherwise.
 
 Let $B'\in\mathscr{B}_{n+k}$ and $\mathbf{v}=(p,r,q)\in\mathscr{V}_{B'}$.  In Case (1) of Lemma \ref{AGKl5.2}, it follows that
 \[ \rho(B')|q|\leq\dfrac{2H_{n+k+1}\rho_0}{R^{n+k}}=2\varepsilon\kappa R\leq\dfrac{1}{4}. \]
 Denote $\bar{\mathbf{w}}(B',\mathbf{v})=(a,b,c)$.  Then for $\mathbf{z}=(\xi,\eta,\zeta)\in \Delta_{\varepsilon,\mathbb{K},\lambda}(\mathbf{v})\cap B'$, we have
 \begin{align*}
  |\langle(\xi,\eta,1),{\mathbf{w}}(B',\mathbf{v})\rangle| &= |a\xi+b\eta+c|\\
  &= \left| a\left( \xi-\dfrac{p}{q}-\zeta\left( \eta-\dfrac{r}{q} \right) \right) + (b+\zeta a)\left( \eta-\dfrac{r}{q} \right) \right| \\
  &\leq |a|\dfrac{\varepsilon\sqrt[4]{d} }{|q|^{1+\lambda}} + |b+\zeta a|\dfrac{\varepsilon\sqrt[4]{d} }{|q|^{2-\lambda}}\\
  &\leq \dfrac{\varepsilon }{|q|}\left( \dfrac{|a|}{|q|^\lambda} + \dfrac{|b+\zeta_{B'}a|}{|q|^{1-\lambda}} + \dfrac{|a||\zeta-\zeta_{B'}|}{|q|^{1-\lambda}} \right)\\
  &\leq \dfrac{\varepsilon\sqrt[4]{d} }{|q|}\left( 2\sqrt[4]{4d} + \dfrac{\sqrt{\rho(B')}}{|q|^{1-\lambda}} + \rho(B')|q|^{2\lambda-1} \right)\\
  &\leq \dfrac{\varepsilon\sqrt[4]{d} }{|q|}\left( 2\sqrt[4]{4d} + \sqrt{\dfrac{\rho(B')}{|q|}} + \sqrt[4]{4d}\rho(B')|q| \right)\\
  &\leq \dfrac{3\varepsilon\sqrt{2d}}{|q|}.
 \end{align*}
 Since $\displaystyle H_{B'}(\mathbf{v}) = |q|\max\{|a|,|b+\zeta_{B'}a|\}\leq\kappa|q|\max\{|a|,|b|\}$, the Euclidean distance of the point $\mathbf{z}$ from the plane $\mathscr{H}(B',\mathbf{v})$ is
 \[
  \dfrac{|a\xi+b\eta+c|}{\sqrt{|a|^2+|b|^2}} < \dfrac{3\varepsilon\sqrt{2d}}{|q|\max\{|a|,|b|\}} \leq \dfrac{3\varepsilon\kappa\sqrt{2d}}{H_{B'}(\mathbf{v})} \leq \dfrac{\varepsilon\kappa}{\sqrt{d}H_{n+k}} = \dfrac{\rho_0}{\sqrt{d}R^{n+k}}.
 \]
 Therefore, in Case (1), $\mathbf{z}\in\mathscr{H}(B',\mathbf{v})^{(\rho_0R^{-n-k}d^{-1/2})}$; that is, \[ \Delta_{\varepsilon,\mathbb{K},\lambda}(\mathbf{v})\cap B'\subset\mathscr{H}_k(B)^{(\rho_oR^{-n-k}d^{-1/2})},\] where $\mathscr{H}_k(B)=\mathscr{H}(B',\mathbf{v})$.

 In Case (2), we show that $\Delta_{\varepsilon,\mathbb{K},\lambda}(\mathbf{v})\cap B\subset\mathscr{H}_1(B)^{(\rho_0R^{-n-1}d^{-1/2})}$ for $\mathbf{v}=t\mathbf{v}_0$ with $|t|\geq 1$, where
 \[
 \mathscr{H}_1(B)=\left\{ (\xi,\eta,\zeta)\in\mathbb{C}^3:\xi-\dfrac{p_0}{q_0}-\zeta_B\left(\eta-\dfrac{r_0}{q_0} \right)=0 \right\}
 \]
 for $\mathbf{v}_0=(p_0,r_0,q_0)$.  If $\mathbf{v}=(p,r,q)$, then $|q|\geq |q_0|$, and $\dfrac{p}{q}=\dfrac{p_0}{q_0}$, $\dfrac{r}{q}=\dfrac{r_0}{q_0}$.  Then any $\mathbf{z}=(\xi,\eta,\zeta)\in \Delta_{\varepsilon,\mathbb{K},\lambda}(\mathbf{v})\cap B$ satisfies
 \begin{align*}
  \dfrac{1}{\sqrt{1+\zeta_B^2}}\left| \xi-\dfrac{p_0}{q_0}-\zeta_B\left(\eta-\dfrac{r_0}{q_0} \right) \right| &\leq \left| \xi-\dfrac{p}{q}-\zeta\left(\eta-\dfrac{r}{q} \right) \right| + |\zeta-\zeta_B|\left| \eta-\dfrac{r}{q} \right| \\
  &\leq \dfrac{\varepsilon\sqrt[4]{d} }{|q|^{1+\lambda}} + \rho(B)\dfrac{\varepsilon\sqrt[4]{d} }{|q|^{2-\lambda}}\\
  &\leq \dfrac{\varepsilon\sqrt[4]{d} }{|q_0|^{1+\lambda}} + \rho(B)\dfrac{\varepsilon\sqrt[4]{d} }{|q_0|^{2-\lambda}}\\
  &\leq \dfrac{\varepsilon\sqrt[4]{d} }{|q_0|^{1+\lambda}}(1+\rho(B)|q_0|)\\
  &\leq \dfrac{\varepsilon\sqrt[4]{2}}{\sqrt{3d}H_{n+1}}\left( 1+\dfrac{2H_{n+2}\rho_0}{R^n} \right)\\
  &= \dfrac{\rho_0\sqrt[4]{2}(1+2\varepsilon\kappa R^2)}{\sqrt{3d}\kappa R^{n+1}} <\dfrac{\rho_0}{\sqrt{d}R^{n+1}};
 \end{align*}
 so that $\mathbf{z}\in\mathscr{H}_1(B)^{(\rho_0R^{-n-1}d^{-1/2})}$.  Thus $\Delta_{\varepsilon,\mathbb{K},\lambda}(\mathbf{v})\cap B\subset\mathscr{H}_1(B)^{(\rho_0R^{-n-1}d^{-1/2})}$ in Case (2) as well.
\end{proof}

\section{Schmidt's Absolute Games in Complex Euclidean Spaces}
We refer to \cite{Sch22,Sch23} and \cite{BFKRW,FSU,KW17} for the basic notions of Schmidt's Game and some of its variants.  The \emph{hyperplane-absolute} (HA) game in $\mathbb{R}^n$ has been studied in depth in \cite{BFKRW,FSU,KW17}.  The first instance of such a game in $\mathbb{C}^n$ appeared in \cite{S}, which we recall here for the sake of completeness.

In what follows, we consider the complex Euclidean space $\mathbb{C}^n$ and set up an HA game as follows:
\begin{itemize}
 \item Two players, Player A and Player B, begin the game with a parameter $0<\beta<\frac{1}{3}$.
 \item Player B initiates by choosing a closed ball $B_0\subset\mathbb{C}^n$ of radius $\rho=\rho_0$.
 \item With Player B having chosen the closed ball $B_j$ with radious $\rho_j$ as their $j^{\text{th}}$ move, Player A chooses a complex hyperplane $\mathscr{H}_j$ and marks a hyperplane-neighborhood
 $\mathscr{H}_j^{(\delta_j)}$ defined in \eqref{hypernbd}
 with $\delta_j\leq\beta\rho_j$ as their $j^{\text{th}}$ move.
 \item The $(j+1)^{\text{th}}$ move by Player B is a closed ball $B_{j+1}\subset B_j\setminus \mathscr{H}_j^{(\delta_j)}$ of radius $\rho_{j+1}\geq\beta\rho_j$.
\end{itemize}
A sequence of nested closed balls $\{B_j\}_{j\in\mathbb{N}}$ results from the above procedure.

A $\beta$-hyperplane-absolute-winning (i.e. $\beta$-HAW) subset $E\subset\mathbb{C}^n$ is one where for every possible sequence of moves $\{B_j\}_{j\in\mathbb{N}}$ by Player B, Player A can force $E\bigcap\left(\cap_{j\in\mathbb{N}}B_j\right)\neq\emptyset$.  Such a set $E$ is called hyperplane-absolute-winning (i.e. HAW) if it is $\beta$-HAW for any $0<\beta<\frac{1}{3}$.  Some properties of $\beta$-HAW and HAW sets in $\mathbb{C}^n$ are listed in \cite[Lemma 2.1]{S}.

On the other hand, the \emph{hyperplane-potential} (HP) game on $\mathbb{C}^n$ uses two fixed parameters $0<\beta<1$ and $\gamma>0$.  In this game, the $j^{\text{th}}$ move by Player A consists of a countable collection of hyperplane neighborhoods $\{\mathscr{H}^{(\delta_{j,k})}_{j,k}:k\in\mathbb{N}\}$ satisfying 
 \[ 
 \sum_{k\in\mathbb{N}} \delta_{j,k}^\gamma\leq(\beta\rho_j)^\gamma; 
 \] 
 while the rest of the game follows exactly like the HA game described above.  The notions of $(\beta,\gamma)$-hyperplane-potential-winning (i.e., $(\beta,\gamma)$-HPW) and HPW sets are analogous to their HAW counterparts: in particular, $E\subset\mathbb{C}^n$ is $(\beta,\gamma)$-HPW if Player A can always guarantee that \[ \bigcap_{j=0}^\infty B_j\cap\left( E\cup\bigcup_{j=0}^\infty \bigcup_{k\in\mathbb{N}} \mathscr{H}_{j,k}^{(\delta_{j,k})}\right) \neq\emptyset,\]
 no matter how Player B chooses their sequence of moves $\{ B_j \}_{j\in\mathbb{N}}$.

{\begin{remark}\label{rem}
As shown in \cite[Lemma 2.4]{NS}, the notions of HAW and HPW sets are equivalent.  Thus, if we can show a certain set to be HPW, we are assured that the same set is HAW as well.
\end{remark}
}
\begin{proposition}\label{bad}
 The set $\mathrm{B}_{\mathbb{K}}(\lambda)$ in \eqref{bad-lambda} is HAW.
\end{proposition}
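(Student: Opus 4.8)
The strategy is to exploit Remark \ref{rem}: instead of playing the hyperplane-absolute game directly, I will show that $\mathrm{B}_{\mathbb{K}}(\lambda)$ is $(\beta,\gamma)$-HPW for a suitable $\gamma$ (say $\gamma$ slightly larger than the real dimension, e.g.\ $\gamma > 4$ since $\mathbb{C}^3 \simeq \mathbb{R}^6$ and hyperplanes are real-codimension-$2$), and then invoke the equivalence of HPW and HAW sets. Fix $0<\beta<1$; Player B opens with a ball $B_0$, which without loss of generality we may assume has radius $\rho_0\leq 1$ after one trivial move, and we set $\kappa$ as in \eqref{AGKe3.7} and choose $R,\varepsilon$ satisfying \eqref{AGKe3.8}–\eqref{AGKe3.9}. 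The plan is to have Player A declare, at each stage, the hyperplane neighborhoods coming from Proposition \ref{AGKl3.7}: whenever Player B has reached a ball lying in some $B\in\mathscr{B}'_n$, and $B'\in\mathscr{B}_{n+k}$ is the relevant refinement, Player A plays (for each $k\geq 1$) the neighborhood $\mathscr{H}_k(B)^{(\rho_0 R^{-n-k} d^{-1/2})}$, which by Proposition \ref{AGKl3.7} contains $\Delta_{\varepsilon,\mathbb{K},\lambda}(\mathbf{v})\cap B'$ for every $\mathbf{v}\in\mathscr{V}_{B',k}$. By Lemma \ref{AGKl3.5} the index $k$ ranges over $1\leq k\leq n$, so only finitely many neighborhoods are needed at each round.

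The core combinatorial step is the budget check: I must verify that the radii $\delta_{j,k} = \rho_0 R^{-n-k}d^{-1/2}$ declared at round $j$ (where the current ball has radius on the order of $\rho_0 R^{-n}$) satisfy $\sum_k \delta_{j,k}^\gamma \leq (\beta\rho_j)^\gamma$. Since $\rho_j \geq \beta\rho_0 R^{-n}$ and the $\delta$'s decay geometrically in $k$ with ratio $R^{-1}$, the sum $\sum_{k\geq 1} (\rho_0 R^{-n-k}d^{-1/2})^\gamma$ is dominated by a constant times $(\rho_0 R^{-n-1}d^{-1/2})^\gamma$, and the required inequality reduces to something like $R^{-\gamma}\cdot(\text{const in }d,\gamma) \leq \beta^{2\gamma}$, which holds once $R$ is taken large enough — this is exactly the role of the lower bound \eqref{AGKe3.8} on $R$ (possibly after enlarging the numerical constant there). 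The remaining task is to confirm that any point in $\bigcap_j B_j$ that avoids all declared hyperplane neighborhoods actually lies in $\mathrm{B}_{\varepsilon,\mathbb{K}}(\lambda)$, hence in $\mathrm{B}_{\mathbb{K}}(\lambda)$: this is where one passes from $B_0$ through the nested $\mathscr{B}'_n$. One shows inductively, using Proposition \ref{AGKl3.6} to handle the low-height vectors (those with $|q|^{1+\lambda}\leq 2H_{n+1}$, which automatically miss $B$) and Proposition \ref{AGKl3.7} to handle the rest (their $\Delta_{\varepsilon,\mathbb{K},\lambda}(\mathbf{v})$-intersections are swallowed by the declared neighborhoods), that a limit point surviving all of Player A's moves meets no $\Delta_{\varepsilon,\mathbb{K},\lambda}(\mathbf{v})$ at all.

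I expect the main obstacle to be bookkeeping the interplay between the two ball-families $\mathscr{B}_n$ and $\mathscr{B}'_n$ and the height stratification: one must be careful that the ball Player B is forced to choose at each round, while of radius $\geq\beta\rho_j$, can be taken inside the correct $\mathscr{B}'_{n+1}$, i.e.\ that removing the finitely many hyperplane neighborhoods from $B\in\mathscr{B}'_n$ still leaves room for a ball in $\mathscr{B}_{n+1}$ satisfying the avoidance condition \eqref{AGKe3.12}. This is essentially an induction translating the real-variable argument of \cite[\S5]{AGK} into $\mathbb{C}^3$, and the substitutions \eqref{replacements} together with the complex versions of the height estimates (Lemma \ref{AGKl3.4}), the partition (Lemma \ref{AGKl3.5}), and the containment-in-a-hyperplane result (Proposition \ref{AGKl3.7}) make every real-variable inequality go through with $q$ replaced by $|q|$ and an extra harmless factor of $\sqrt{d}$ or $\sqrt[4]{d}$ absorbed into the constants of \eqref{AGKe3.8}–\eqref{AGKe3.9}. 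Once the HPW property is established for all $0<\beta<1$, Remark \ref{rem} and the restriction $0<\beta<\frac13$ in the HA game give that $\mathrm{B}_{\mathbb{K}}(\lambda)$ is HAW, completing the proof.
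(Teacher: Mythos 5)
Your proposal is correct and follows essentially the same route as the paper: play the hyperplane-potential game, have Player A declare the neighborhoods of the hyperplanes $\mathscr{H}_k(B)$ supplied by Proposition \ref{AGKl3.7} whenever Player B's ball enters some $\mathscr{B}'_n$, verify the budget by imposing a further lower bound on $R$ (the paper's condition \eqref{AGKe3.13}, which is exactly your reduction to $R^{-\gamma}\cdot\mathrm{const}\leq\beta^{2\gamma}$), conclude via Lemma \ref{AGKl3.5} and Propositions \ref{AGKl3.6}--\ref{AGKl3.7} that any surviving limit point avoids every $\Delta_{\varepsilon,\mathbb{K},\lambda}(\mathbf{v})$, and finish with the HPW/HAW equivalence of Remark \ref{rem}. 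One small caution: that equivalence requires winning for every $\gamma>0$, not a single ``suitable'' $\gamma$, but your budget estimate works for arbitrary $\gamma$ once $R$ is chosen accordingly; also, in the potential game Player B is not obliged to avoid the declared neighborhoods, so your worry about ``leaving room'' inside $\mathscr{B}'_{n+1}$ is unnecessary.
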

\begin{proof}
 We set up an HP game in $\mathbb{C}^3$ with the target set being $\mathrm{B}_{\mathbb{K}}(\lambda)$ as in \eqref{bad-lambda}.  Let $0<\beta<1$, $\gamma>0$.  Let Player B make the initial move as a closed ball $B_0\subset\mathbb{C}^3$.  Assume that the moves by Player B satisfy $\rho_j=\rho(B_j)\to 0$; otherwise Player A can win by default.  With possibly arbitrary initial moves by Player A and relabeling indices, assume that $\rho_0\leq 1$.  In view of the ruleset of the HP games, assume that $R$ defined in \eqref{AGKe3.8} additionally satisfies
 \begin{equation}\label{AGKe3.13} \dfrac{1}{R^\gamma-1}\leq\left( \dfrac{\beta^2}{2} \right)^\gamma. \end{equation}
 Then $\varepsilon$, $\mathscr{B}_n$, $\mathscr{V}_B$, $\mathscr{V}_{B,k}$, and $\mathscr{B}'_n$ defined as in \eqref{AGKe3.9}, \eqref{closedballfamily}, \eqref{V_B}, \eqref{V_B,k}, and \eqref{AGKe3.12}, respectively, imply the truth of Lemma \ref{AGKl3.5} and Propositions \ref{AGKl3.6} and \ref{AGKl3.7}.  In response of Player B making the move $B_j$, Player A can utilize the strategy outlined below: 
 
 \begin{enumerate}
  \item If there is $n\geq 0$ such that $j$ is the smallest index for which $B_j\in \mathscr{B}'_n$, then choose the family of complex hyperplane neighborhoods $\{ \mathscr{H}_k(B)^{(2\rho_0R^{-n-k}d^{-1/2})}:k\in\mathbb{N} \}$ with the planes $\mathscr{H}_k(B)$ are as in Proposition \ref{AGKl3.7}.
  \item If no such $n$ exists at the $j^{\text{th}}$ move, then choose an arbitrary complex hyperplane neighborhood avoiding $B_j$.
 \end{enumerate}
Let $j_n$ be the index $j$ for which (1) above holds, and let $\mathscr{N}$ be the set of all such $n$.  This move by Player A is legal, since we have $B_{j_n}\in\mathscr{B}_n$, so that $\rho_{j_n}>\dfrac{\beta\rho_0}{R^n}$, and
\[ \sum_{k\in\mathbb{N}}\left(\dfrac{2\rho_0}{R^{n+k}} \right)^\gamma = \left(\dfrac{2\rho_0}{R^{n}} \right)^\gamma \dfrac{1}{R^\gamma-1}\leq(\beta\rho_0)^\gamma \]
then follows from \eqref{AGKe3.13}.  Then Player A has a guaranteed win with the above strategy; that is,
\[ \bigcap_{j\in\mathbb{N}} B_j \subset \mathrm{B}_{\mathbb{K}}(\lambda) \cup\bigcup_{n\in\mathscr{N}}\bigcup_{k\in\mathbb{N}} \mathscr{H}_k(B_{j_n})^{(2\rho_0R^{-n-k}d^{-1/2})}. \]
Analogous to \cite{AGK}, the case $\mathscr{N}\neq \mathbb{N}\cup\{0\}$ follows on applying Lemma \ref{AGKl3.5} to the smallest $n\notin\mathscr{N}$, and then Proposition \ref{AGKl3.7} to $B=B_{j_{n-k}}$ and $B'=B_j$, where $j$ is such that $B_j\in\mathscr{B}_n$ but $B_{j-1}\notin\mathscr{B}_j$; on the other hand, the case $\mathscr{N}= \mathbb{N}\cup\{0\}$ is a consequence of Proposition \ref{AGKl3.6}.  Hence Player A always wins with the target set being $\mathrm{B}_{\mathbb{K}}(\lambda)$, proving Proposition \ref{bad}.
\end{proof}
 
 \begin{corollary}\label{cor3.3}
Let $0\leq \lambda\leq 1$, let $F_\lambda^+$ and $U$ be as in \eqref{F-lambdaplus} and \eqref{U}, respectively.  Then the set \[ \{u\in U:u\Gamma\in E(F_\lambda^+)\} \] is HAW in $U$. 
 \end{corollary}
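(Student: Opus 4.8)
The plan is to transport the winning property of $\mathrm{B}_{\mathbb{K}}(\lambda)\subset\mathbb{C}^3$ over to $U$ along the parametrization $\iota\colon\mathbb{C}^3\to U$, $\mathbf{z}\mapsto u_{\mathbf{z}}^{-1}$. I will carry this out directly for $\lambda\geq\tfrac12$ and reduce the case $\lambda<\tfrac12$ to it by conjugating with the permutation matrix of \eqref{perm}.

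Assume first $\lambda\geq\tfrac12$. Since $\mathbf{z}\mapsto u_{\mathbf{z}}$ is a bijection of $\mathbb{C}^3$ onto $U$ and inversion is a bijection of $U$, the map $\iota$ is a bijection; written in the canonical coordinates on the unipotent group $U$, both $\iota$ and $\iota^{-1}$ are polynomial in $\xi,\eta,\zeta$, so $\iota$ is biholomorphic. For $u=\iota(\mathbf{z})$ we have $u\Gamma\in E(F_\lambda^+)$ if and only if the trajectory $F_\lambda^+u_{\mathbf{z}}^{-1}\Gamma$ is bounded in $G/\Gamma$, which by Proposition \ref{AGKl3.2} holds if and only if $\mathbf{z}\in\mathrm{B}_{\mathbb{K}}(\lambda)$; hence
\[
 \{u\in U:u\Gamma\in E(F_\lambda^+)\}=\iota\bigl(\mathrm{B}_{\mathbb{K}}(\lambda)\bigr).
\]
By Proposition \ref{bad}, $\mathrm{B}_{\mathbb{K}}(\lambda)$ is HAW on $\mathbb{C}^3$. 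As $\iota$ is holomorphic, its differential is everywhere $\mathbb{C}$-linear, so on the closed ball with which a play of the hyperplane game begins --- a compact set --- $\iota$ maps each complex hyperplane neighbourhood into a complex hyperplane neighbourhood of comparable size; by the invariance of the complex HAW property under such maps (the complex analogue, recorded in \cite[Lemma 2.1]{S}, of the diffeomorphism invariance from \cite{BFKRW,KW17}) the image $\iota\bigl(\mathrm{B}_{\mathbb{K}}(\lambda)\bigr)$ is HAW on $U$, which is the assertion for $\lambda\geq\tfrac12$.

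For $\lambda<\tfrac12$ the argument is entirely parallel. Conjugation by the permutation matrix $P$ of \eqref{perm} preserves $\Gamma$ (it merely permutes rows and columns of integral matrices), restricts to a biholomorphism $U\to\tilde{U}$, and sends $g_{\lambda}(t)$ to $g_{1-\lambda}(t)$, hence $F_\lambda^+$ to $F_{1-\lambda}^+$; thus it induces a homeomorphism of $G/\Gamma$ taking $E(F_\lambda^+)$ onto $E(F_{1-\lambda}^+)$, and in particular it maps $\{u\in U:u\Gamma\in E(F_\lambda^+)\}$ onto $\{v\in\tilde{U}:v\Gamma\in E(F_{1-\lambda}^+)\}$. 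Since $1-\lambda\geq\tfrac12$, and since Proposition \ref{AGKl3.2} and the constructions of \S3--\S4 underlying Proposition \ref{bad} carry over verbatim with $u_{\mathbf{z}},\mathbf{v}$ systematically replaced by $\tilde{u}_{\mathbf{z}},\tilde{\mathbf{v}}$ (the symmetry recorded in the Remark following Proposition \ref{AGKl3.2}), the case $\lambda\geq\tfrac12$ applied to the $\tilde{U}$-parametrization shows the latter set is HAW on $\tilde{U}$; pulling back along the biholomorphism $U\to\tilde{U}$, which again preserves the HAW property, yields the claim.

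All of the substantive work resides in Propositions \ref{AGKl3.2} and \ref{bad}, so the one step I would write out in full is the passage from ``HAW on $\mathbb{C}^3$'' to ``HAW on $U$'': one must check that the hyperplane game on $U$ determined by its canonical coordinates agrees, up to the controlled distortion noted above, with the one obtained by transporting the standard coordinates on $\mathbb{C}^3$ through the nonlinear map $\iota$. This is exactly where the distortion estimate for hyperplane neighbourhoods under holomorphic maps is needed, and I regard it as the only genuine obstacle; the bookkeeping realizing the $\lambda\leftrightarrow1-\lambda$ symmetry is routine.
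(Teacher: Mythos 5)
Your proposal is correct and follows essentially the same route as the paper, which likewise deduces the corollary from Proposition \ref{AGKl3.2} together with the invariance of the HAW property under the biholomorphic parametrization $\mathbf{z}\mapsto u_{\mathbf{z}}^{-1}$ (the paper cites \cite[Proposition 5.1]{S} for this transfer, and handles $\lambda<\tfrac12$ by the same $P$-conjugation you describe, via the Remark after Proposition \ref{AGKl3.2}). Your write-up merely makes explicit the steps the paper leaves as a one-line citation.
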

The proof follows from Proposition \ref{AGKl3.2} and the biholomorphic correspondence between complex manifolds proved in \cite[Proposition 5.1]{S}.

\begin{proof}[Proof of Theorem \ref{EF}]
Using \cite[Proposition 5.1]{S} in place of \cite[Lemma 2.2(iii,v)]{AGK}, \cite[Lemma 2.2(3)]{S} instead of \cite[Lemma 2.2(iv)]{AGK}, and Corollary \ref{cor3.3} for \cite[Theorem 3.1]{AGK}, the proof of Theorem \ref{EF} is exactly analogous to that of \cite[Theorem 1.2, Theorem 1.3]{AGK}.
\end{proof}

\section*{Acknowledgements}
I sincerely thank Prof. Anish Ghosh for suggesting the question, for helpful discussions and for guidance throughout the course of this work.  I extend my sincere gratitude to the anonymous referee for their insightful comments and suggestions.  I am particularly thankful for suggesting the corrected version of Corollary \ref{AGKl3.3} along with the idea of the proof which has significantly improved the result.  This work is supported by CSIR-JRF grant 09/877(0014)/2019-EMR-I.

\end{document}